\setlist{topsep=-1pt}
\newcommand{\hlabel}[1]{%
   \protected@write \@auxout {}%
         {\string \newlabel {#1}{{\theteorema}{\thepage}{}{#1}{}} }%
   \hypertarget{#1}{}
}
\newtheorem{tma}{Theorem}[section]
\theoremstyle{definition} \newtheorem{defi}[tma]{Definition}
\theoremstyle{definition} 
\newtheorem{prop}[tma]{Proposition}
\newtheorem{lema}[tma]{Lemma}
\theoremstyle{definition} 
\newtheorem{coro}[tma]{Corollary}
\theoremstyle{definition} \newtheorem{ejem}[tma]{Example}
\theoremstyle{definition} 
\newtheorem{remark}[tma]{Remark}
\newcommand{\C}{\mathbb{C}}
\newcommand{\PGL}{\mathrm{PGL}}
\title{Configuration spaces of orbits and their $S_n$-equivariant $E$-polynomials}\author{Alejandro Calleja}
\address{Instituto de Ciencias Matemáticas, Consejo Superior de Investigaciones Científicas, Nicolás Cabrera, 15, Madrid, 28049, Madrid, Spain}
\address{Departmento de Álgebra, Geometría y Topología, Facultad de Ciencias Matemáticas, Universidad Complutense de Madrid, Plaza de las Ciencias, 3, Madrid, 28040, Madrid, Spain}
\email{alejac03@ucm.es}
\subjclass[2020]{Primary 14C30; Secondary: 14D20, 14L24, 55R80}
\begin{document}
\maketitle

\begin{abstract}
    In this paper, we study the configuration space of orbits, a generalization of the configuration space of points but for algebraic varieties that are acted by an algebraic reductive group. The main objective of this work is to study the $E$-polynomials of these spaces and their quotients by $S_n$. For this purpose, we develop a novel method for computing the $S_n$-equivariant $E$-polynomial of an algebraic variety, and we apply it to this kind of varieties.
\end{abstract}

\section{Introduction}
Given a space $X$ the \emph{$n$-th ordered configuration space of $X$} is defined as the set
\begin{equation*}
    C_n(X)=\{(x_1,\ldots,x_n)\in X^n \ | \ x_i\neq x_j \text{ if } i\neq j\}.
\end{equation*}
By quotienting this space by the permutation action of the symmetric group $S_n$, one can consider the \emph{$n$-th unordered configuration space of $X$} $B_n(X)=C_n(X)/S_n$.

The cohomology of these spaces has been widely studied in the case in which $X$ is a smooth manifold, especially for surfaces \cite{arnold2014cohomology,bodigheimer2006rational,drummond2016betti,knudsen2017betti, napolitano2003cohomology,salvatore2004configuration,wang2002braid}, but also in arbitrary dimension, with the works of Fulton and MacPherson \cite{fulton1994compactification}, Kriz \cite{kriz1994rational}, Torato \cite{totaro1996configuration} and Cohen and Taylor \cite{cohen2006computations}. Nevertheless, in the case in which $X$ is a possibly singular algebraic variety very little is known. In this setting, the cohomology of the varieties is enriched with an additional structure known as a mixed Hodge structure. The aim of this work is to study this structure for configuration spaces of algebraic varieties, by finding a method for computing their $E$-polynomials in terms of the one of the algebraic variety $X$. Recall that the $E$-polynomial of a variety $Z$ is given by the formula
\begin{equation*}
    e(Z)=\sum_{k,p,q}(-1)^kh_c^{k,p,q}(Z)\,u^pv^q \in \mathbb{Z}[u,v],
\end{equation*}
where the $h_c^{k,p,q}(Z)$ are the Hodge numbers for the compactly supported cohomology of $Z$.

Moreover, when the algebraic variety $X$ is acted by an algebraic group $G$, a generalization of the configuration space can be considered, the \emph{configuration space of orbits} (see Definition \ref{conforbits}). This space consists of tuples of $n$ points of $X$ whose orbits through the action of $G$ are pairwise disjoint. This kind of spaces were firstly introduced in \cite{merino1997orbit} to study universal covers of configuration spaces of points and classifying spaces of normal subgroups of surface braid groups. The cohomology of these spaces was studied in \cite{bibby2022combinatorics} and \cite{bibby2023generating} for the case were $G$ is a finite group; for which the authors give a description of the spaces in terms of some combinatorial objects called Dowling posets. Moreover, they compute their $E$-polynomial by relating it with the characteristic polynomial of the mentioned posets. Nevertheless, in the case of non-finite groups nothing is known. In this paper, we address the case where $G$ is a complex connected algebraic reductive group, computing the $E$-polynomial of this space and of its quotient by $S_n$ by reducing this problem to the one of computing the $E$-polynomials of the usual ordered and unordered configuration spaces of the Geometric Invariant Theory (GIT) quotient $X\sslash G$. To be precise, in Section \ref{confspace} we prove the following theorem, which is a combination of Corolary \ref{orbitsthroughquot} and Proposition \ref{eqvarofCnXGquot}:

\begin{tma}\label{maintma}
    Let $X$ be a complex algebraic variety and $G$ a complex algebraic connected reductive group that acts freely on $X$. The $S_n$-equivariant $E$-polynomial of the configuration space of orbits is given by the formula
    $$e_{S_{n}}\left(C_n(X,G)\right)=e_{S_{n}}\left(C_n(X\sslash G^n)\right)\otimes e_{S_{n}}(G^n)$$
\end{tma}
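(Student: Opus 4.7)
The plan is to realize $C_n(X,G)$ as the total space of an $S_n$-equivariant principal $G^n$-bundle over $C_n(X\sslash G)$, and then deduce the formula from a multiplicativity property of $E$-polynomials for principal bundles with connected reductive structure group, upgraded to the $S_n$-equivariant setting.

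First I would construct the bundle map. Since $G$ is reductive and acts freely on $X$, the GIT quotient $\pi\colon X\to X\sslash G$ is a geometric quotient and, by Luna's slice theorem, carries the structure of a principal $G$-bundle locally trivial in the étale topology. Taking orbits coordinatewise defines a morphism
$$\Pi\colon C_n(X,G)\longrightarrow C_n(X\sslash G),\qquad (x_1,\ldots,x_n)\longmapsto (\pi(x_1),\ldots,\pi(x_n)),$$
where the defining condition of $C_n(X,G)$ (pairwise disjoint orbits) is precisely what guarantees that the image lands in $C_n(X\sslash G)$. The fiber over $(y_1,\ldots,y_n)$ is the product of $G$-torsors $\pi^{-1}(y_1)\times\cdots\times\pi^{-1}(y_n)$, so étale-local triviality of $\pi$ upgrades factorwise to $\Pi$, exhibiting it as a principal $G^n$-bundle. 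A separate verification shows that this bundle is strongly $S_n$-equivariant: the symmetric group acts by coordinate permutation on the total space, the base, and the structure group $G^n$, and these three actions intertwine via $\sigma\cdot(g\cdot x)=(\sigma\cdot g)\cdot(\sigma\cdot x)$. I expect this structural fact to be the content of Corollary \ref{orbitsthroughquot}.

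Next I would extract the formula. Non-equivariantly, for a principal bundle with connected complex algebraic structure group the $E$-polynomial is multiplicative, yielding $e(C_n(X,G))=e(C_n(X\sslash G))\cdot e(G^n)$ via the degeneration at $E_2$ of the Leray spectral sequence and its splitting as mixed Hodge structures. For the equivariant refinement, one upgrades the induced isomorphism
$$H^*_c(C_n(X,G))\;\cong\;H^*_c(C_n(X\sslash G))\otimes H^*_c(G^n)$$
to an identity in the Grothendieck ring of $S_n$-representations valued in mixed Hodge structures, using the $S_n$-equivariance of $\Pi$ from the previous paragraph. Taking equivariant $E$-polynomials then delivers the displayed tensor product identity. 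This step is the role I would expect Proposition \ref{eqvarofCnXGquot} to play.

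The main obstacle is precisely this last step: promoting the standard non-equivariant multiplicativity to its $S_n$-equivariant version in the category of mixed Hodge structures, while simultaneously tracking the permutation action on $G^n$ and on $C_n(X\sslash G)$. Non-equivariantly the argument rests on Deligne-type degeneration results, but equivariantly one must check that the Leray decomposition is compatible with the diagonal $S_n$-action on base and fiber, rather than merely with the actions taken separately. Controlling this compatibility on cohomology is exactly the role of the paper's novel method for computing $S_n$-equivariant $E$-polynomials, and combining the two inputs should assemble into the stated formula.
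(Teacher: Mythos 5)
Your argument is correct, but it takes a genuinely different route from the paper's. The paper splits the proof in two: first it applies Luna's slice theorem to the GIT quotient $C_n(X,G)\to C_n(X,G)\sslash G^n$ to get an analytically locally trivial $S_n$-equivariant fibration with fibre $G^n$ and trivial monodromy (this is Proposition \ref{mainbundle} and Corollary \ref{orbitsthroughquot}), and then it separately proves $e_{S_n}\left(C_n(X,G)\sslash G^n\right)=e_{S_n}\left(C_n(X\sslash G)\right)$ by showing that $(x_1,\ldots,x_n)\mapsto([x_1],\ldots,[x_n])$ is a \emph{pseudo-quotient} and invoking the machinery of Section \ref{pseudoquotients} (equivariant bijective morphisms, stratification into normal loci). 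You bypass that second step entirely by observing that, with the paper's definition of $C_n(X,G)$ via $\pi(x_i)\neq\pi(x_j)$, the configuration space of orbits is exactly the preimage of the open set $C_n(X\sslash G)$ under the product bundle $\pi^n:X^n\to(X\sslash G)^n$, hence is itself a locally trivial principal $G^n$-bundle over $C_n(X\sslash G)$; one application of the equivariant multiplicativity (Proposition \ref{propeqvar}(c), with trivial monodromy from connectedness of $G^n$) then finishes. Both routes rest on the same two black boxes — Luna's theorem for local triviality and the equivariant K\"unneth/Leray multiplicativity, which the paper also only cites rather than reproves — so your shortcut is legitimate and cleaner in the free case; what the paper's detour buys is a comparison result (Proposition \ref{eqvarofCnXGquot}) stated without freeness hypotheses and reusable elsewhere, e.g.\ in Theorem \ref{CnquotSn}. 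Your guesses about which named statements play which role are slightly off (Corollary \ref{orbitsthroughquot} is the fibration over the GIT quotient, and Proposition \ref{eqvarofCnXGquot} is the pseudo-quotient identification, not the equivariant Leray step), but that does not affect the mathematics. The one shared caveat is that Proposition \ref{propeqvar}(c) as stated assumes base and fibre smooth, and neither $C_n(X\sslash G)$ nor $C_n(X,G)\sslash G^n$ need be smooth; this affects your argument and the paper's equally.
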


In Section \ref{pmformula} we reduce the problem of computing the $S_n$-equivariant $E$-polynomial of the ordered configuration space $C_n(X\sslash G^n)$ to computing the $E$-polynomials of the quotients $C_n(Z\sslash G^n)/S_n$ for an arbitrary algebraic variety $Z$. For this purpose, we prove in Theorem \ref{CnquotSn} the formula
$$e\left(C_n(Z)/S_n\right)=e\left(\mathrm{Sym}^n(Z)\right)-\sum_{l=1}^{n-1}p(n,l)e\left(C_l(Z)/S_l\right),$$
where $p(n,l)$ is the number of partitions of $n$ of length $l$ and $\mathrm{Sym}^n(X) = X^n/S_n$ is the $n$-th symmetric product of $X$. This formula allows us to compute the $E$-polynomial of $C_n(Z)/S_n$ in a recursive way.

Even though we focus on the $E$-polynomial, most of the results are also valid for motives in the Grothendieck ring of algebraic varieties, thanks to Vogel's PhD. Thesis \cite{vogelthesis}, where he studies equivariant motives. We recall that the Grothendieck ring of algebraic varieties is defined as the formal ring $\mathbf{K}Var$ generated by the isomorphism classes of algebraic varieties modulo the relations $[X]=[U]+[Y]$ whenever we have a stratification $X=U\sqcup Y$ with $U$ open and $Y$ closed, and with the product being defined by $[X][Y]=[X\times Y]$. The main inconvenience for making all the work with motives is Theorem \ref{fibrationCn}, since the fiber bundle introduced there is not algebraic, so we do not have multiplicativity, as we have with the $E$-polynomial.

To avoid misunderstandings with the configuration space of orbits, from now on we will refer to the usual ordered configuration space as the \emph{configuration space of points}.  

A very interesting application of this kind of configuration spaces can be appreciated when studying the applications of character varieties to knot theory. Let us recall the definition of a character variety. Let $\Gamma$ be a finitely generated group and let $G$ be a complex reductive group. It is an easy fact that the set $R(\Gamma,G)$ of all representations $\rho:\Gamma\to G$ of $\Gamma$ into $G$ has a structure of algebraic variety, called the \emph{representation variety} of $\Gamma$ into $G$. Moreover, since $G$ acts on $R(\Gamma,G)$ by conjugation, we can consider the GIT quotient $\mathfrak{X}(\Gamma,G)=R(\Gamma,G)\sslash G$, called the \emph{character variety} of $\Gamma$ into $G$. 

This kind of varieties has been widely studied in recent years, mainly due to their relation with the non-abelian Hodge correspondence \cite{corlette1988flat,hitchin1987self,simpson1994moduli}. This correspondence relates the moduli space of Higgs bundles over an orientable surface $\Sigma$ of genus $g$ with the character variety $\mathfrak{X}(\pi_1(\Sigma,G))$ and with the moduli of flat connections over the surface.

However, in the last few years, a new interest in these varieties has arisen due to their connection with knot theory. Let $K \subset \mathbb{R}^3$ be a knot. If we take the group $\Gamma_K=\pi_1(\mathbb{R}^3-K)$, we can consider the character variety $\mathfrak{X}(\Gamma_K,G)$. Then, we can define a knot invariant by assigning to $K$ the $E$-polynomial of its character variety $\mathfrak{X}(\Gamma_K,G)$. Following this direction, trivial links were studied in \cite{florentino2012singularities,florentino2021serre,lawton2008minimal,lawton2016polynomial}, the figure eight knot in \cite{falbel2016character,heusener2016sl}, and torus knots in \cite{GONZALEZPRIETO2022852,kitano2012twisted,martínezmartínez2012su2character,munoz2016geometry,muñoz2009sl2ccharacter}. This last family of knots is the most studied one so far but, due to the great complexity of the computations, it has only been studied for a few algebraic groups, and all of them are of low dimension. In this sense, the best result reported in the literature is due to González-Prieto and Muñoz in \cite{GONZALEZPRIETO2022852}, where they were able to compute the $E$-polynomial of $\mathfrak{X}(\Gamma_K,\mathrm{SL}_4(\mathbb{C}))$ for torus knots in a recursive way. 

The ideas used in that work by González-Prieto and Muñoz are very general, but if we want to completely generalize them to compute the $E$-polynomial of $\mathfrak{X}\left(\Gamma_K,\mathrm{SL}_r(\mathbb{C})\right)$ for arbitrary range $r$, then we must understand the configuration space $C_n\left(R^{\mathrm{irr}}(\Gamma_K,\mathrm{PGL}_r(\mathbb{C})\right)$ whose elements are tuples of pairwise non-isomorphic irreducible representations of $\Gamma_K$ into $\mathrm{SL}_r(\mathbb{C})$. Since two representations are isomorphic if and only if they are conjugated by an element of $\mathrm{PGL}_r(\mathbb{C})$, we can say that the variety $C_n\left(R^{\mathrm{irr}}(\Gamma_K,\mathrm{PGL}_r(\mathbb{C}))\right)$ is the set of $n$-tuples $(x_1,\ldots,x_n)$ of elements of the variety $R^{\mathrm{irr}}\left(\Gamma_K,\mathrm{SL}_r(\mathbb{C})\right)$ of irreducible representations, such that $x_i$ is not in the orbit through the action of $\mathrm{PGL}_r(\mathbb{C})$ of $x_j$ if $i\neq j$. In other words, this variety is not other but the configuration space of orbits of $R^{\mathrm{irr}}\left(\Gamma_K,\mathrm{SL}_r(\mathbb{C})\right)$ for the action of $\mathrm{PGL}_r$.

This work is structured as follows. Section \ref{sec:Hodge} is devoted to introducing the main concepts of Hodge Theory and Geometric Invariant Theory with which we are going to work. We also study the relation between these two branches. In Section \ref{confspace} we introduce the configuration space of orbits, and study their relation with the configuration space of points.  In section \ref{CnSn} we compute the $E$-polynomials of $C_n(X)$ and $C_n(X)/S_n$ in terms of the $E$-polynomial of $X$. Section \ref{pmformula} is devoted to finding a method for computing the $S_n$ equivariant $E$-polynomial of an arbitrary algebraic variety, which will allow us to compute the $E$-polynomial of the quotient by $S_n$ of the total space of a fiber bundle in terms of the $E$-polynomial of the quotients basis and of the fiber. Finally, in Section \ref{CnSlambda} we apply the method of Section \ref{pmformula} to the case of configuration spaces of points, allowing us to compute the $E$-polynomial of the quotient of the configuration space of orbits by $S_n$.

\textbf{Aknowledgements:} I want to thank my PhD advisor Ángel González-Prieto for his invaluable help and support without which this paper would have never been possible. I also thank Prof. Carlos Florentino for many useful comments.

The author has been partially supported by Ministerio de Ciencia e Innovación Projects CEX-2019-000904-S-20-5 and PID2021-124440NB-I00 (Spain).

\section{Hodge Theory and GIT quotients}\label{sec:Hodge}
In this section we will review the main concepts of Hodge Theory and Geometric Invariant Theory, studying the relation between them.
\subsection{The E-polynomial} Let $H$ be a complex vector space. A \emph{pure Hodge structure of weight $k$ on $H$} is a decomposition $H=\bigoplus_{p+q=k}H^{p,q}$ such that $\overline{H^{p,q}}=H^{q,p}$. If we consider the descending filtration $F^p=\bigoplus_{s\geq p}H^{s,k-s}$, we can define the associate graded complex $\mathrm{Gr}_F^p(H)=F^p/F^{p+1}=H^{p,k-p}$.

It is well known that the cohomology groups $H^k(Z;\mathbb{C})$ of a compact K\"ahler manifold $Z$ admit a pure Hodge structure of weight $k$. Deligne proved in \cite{deligne1971theorie} that this could be extended for general algebraic varieties. In more detail, he proved that the complex cohomology of these varieties has a descending Hodge filtration
\begin{equation*}
    H^k(Z; \mathbb{C}) = F_{0} \supset \cdots \supset F_{\ell} \supset F_{\ell+1} \supset \cdots \supset 0,
\end{equation*}
and the rational cohomology has an ascending \emph{weigh filtration} 
\begin{equation*}
    0= W_{0}\subset \cdots \subset W_{\ell} \subset W_{\ell+1}\subset \cdots \subset H^k(Z;\mathbb{Q})
\end{equation*}
such that $F$ induces a pure hodge structure of weight $k$ on each $\mathrm{Gr}_{k}^{W_{\mathbb{C}}}H^k(Z;\mathbb{C})$, where $W_{\mathbb{C}}$ is the complexified weight filtration. With these filtrations, we define the spaces
\begin{equation*}
    H^{k,p,q}(Z)=\mathrm{Gr}_F^p\mathrm{Gr}_{p+q}^{W_{\mathbb{C}}}H^k(Z;\mathbb{C})
\end{equation*}
and the Hodge numbers $h^{k,p,q}(Z)=\dim H^{k,p,q}(Z)$.

We can repeat these constructions on the compactly supported cohomology, obtaining the spaces $H^{k,p,q}_c(Z)$ and the Hodge numbers $h_c^{k,p,q}(Z)=\dim H_c^{k,p,q}(Z)$. The $E$-polynomial of $Z$ is defined as
\begin{equation*}
    e(Z)=\sum_{k,p,q}(-1)^kh^{k,p,q}_c(Z)\, u^pv^q.
\end{equation*}
When $h_c^{k,p,q}(Z)=0$ for $p\neq q$ we say that $Z$ is \emph{of balanced type}. Observe that in this case, the $E$-polynomial only depends on the product $uv$, so it is customary to perform the change of variables $q=uv$.

The main objective of this paper is to compute the $E$-polynomial for a special kind of variety, the configuration space of orbits $C_n(X,G)$, but also of its quotient $C_n(X,G)/S_n$ by the action of the symmetric group $S_n$. For the latter, we will need the equivariant version of the $E$-polynomial.

In this direction, let $H$ be a finite group that acts on an algebraic variety $Z$. This action induces a new action of $H$ in the cohomology $H_c^k(Z)$ that respects its mixed Hodge structure. Therefore, each space $H_c^{k,p,q}(Z)$ is a representation of $H$. Let $[H_c^{k,p,q}(Z)]\in R(H)$ be the class of $H_c^{k,p,q}(Z)$ in the representation ring of $H$. 
\begin{defi}
    We define the \emph{equivariant $E$-polynomial} of $Z$ as 
    \begin{equation*}
        e_H(Z)=\sum_{k,p,q}(-1)^k[H_c^{k,p,q}(Z)]\, u^pv^q\in R(H)[u,v].
    \end{equation*}
\end{defi}

The interesting fact about the equivariant $E$-polynomial is that it satisfies the following properties:

\begin{prop}\label{propeqvar}
    \begin{itemize}
        \item[(a)] For every subgroup $K<H$ it holds
        \begin{equation*}
            e(Z/K)=\sum_{k,p,q}(-1)^k\dim\left(H_c^{k,p,q}(Z)^Ku^pv^q\right).
        \end{equation*}
        where for a given $K$-module $V$, we denote $V^K$ for the subspace of $V$ invariant under the action of $K$.

        \item[(b)] If $Y\subset Z$ is a closed subvariety of $Z$ and $U=Z-Y$, then $e_H(Z)=e_H(Y)+e_H(U)$.
        
        \item[(c)] Let $F\to E\to B$ be a fibration locally trivial in the analytic topology such that $F$ and $B$ are smooth. If $\pi_1(B)$ acts trivially on the cohomology of $F$, then
        \begin{equation*}
            e_H(E)=e_H(F)\otimes e_H(B).
        \end{equation*}
    \end{itemize}
\end{prop}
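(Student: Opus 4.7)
The plan is to prove the three parts independently, since each uses a standard but distinct tool, and then note the main technical subtlety.

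For part (a), the strategy is to invoke the classical fact that for a finite group $K$ acting on a complex algebraic variety $Z$, the quotient map $\pi \colon Z \to Z/K$ induces an isomorphism $H_c^*(Z/K;\mathbb{C}) \cong H_c^*(Z;\mathbb{C})^K$ via the averaging map $\frac{1}{|K|} \sum_{k \in K} k^*$, which is a well-defined projector because we work in characteristic zero. This isomorphism respects the mixed Hodge structure, because $\pi$ is a morphism of algebraic varieties and averaging is functorial on each graded piece. Consequently $H_c^{k,p,q}(Z/K) \cong H_c^{k,p,q}(Z)^K$, and taking alternating sums of dimensions weighted by $u^pv^q$ yields the formula directly from the definition of the ordinary $E$-polynomial.

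For part (b), I would start from the standard long exact sequence of a pair in compactly supported cohomology
\begin{equation*}
\cdots \to H_c^k(U) \to H_c^k(Z) \to H_c^k(Y) \to H_c^{k+1}(U) \to \cdots,
\end{equation*}
which is $H$-equivariant since $Y$ (and hence $U$) is assumed $H$-invariant, and whose maps are morphisms of mixed Hodge structures by Deligne's work. Restricting to the $(p,q)$-component of the graded pieces gives a long exact sequence of $H$-modules for each $(p,q)$. The usual argument that the alternating sum of classes in an exact sequence vanishes in $R(H)$ then produces the identity $[H_c^{k,p,q}(Z)] = [H_c^{k,p,q}(Y)] + [H_c^{k,p,q}(U)]$ in a telescoping fashion, which upon multiplication by $(-1)^k u^p v^q$ and summation gives $e_H(Z) = e_H(Y) + e_H(U)$.

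For part (c), assuming the fibration is $H$-equivariant (so that $H$ acts on $F$ and $B$ compatibly with the projection), I would use the Leray spectral sequence $E_2^{p,q} = H_c^p(B; \mathcal{H}_c^q(F)) \Rightarrow H_c^{p+q}(E)$. The hypothesis that $\pi_1(B)$ acts trivially on $H^*(F)$ trivialises the local system, so $E_2^{p,q} = H_c^p(B) \otimes H_c^q(F)$. Together with smoothness of $F$ and $B$, this is the setting in which Deligne's degeneration theorem for mixed Hodge structures applies, giving an isomorphism $H_c^k(E) \cong \bigoplus_{p+q=k} H_c^p(B) \otimes H_c^q(F)$ of mixed Hodge structures. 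The whole spectral sequence is $H$-equivariant because the fibration is, so the Künneth-type splitting is an isomorphism of $H$-modules on each $(p,q)$-component. Alternating summation yields $e_H(E) = e_H(F) \otimes e_H(B)$.

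The main obstacle lies in part (c): one must ensure both that the mixed Hodge spectral sequence degenerates at $E_2$ under the trivial monodromy assumption and that this degeneration is compatible with the $H$-action, so that the Künneth splitting passes to the level of virtual representations rather than merely dimensions. Once these compatibilities are granted, the proof reduces to the same alternating-sum bookkeeping that proves the non-equivariant multiplicativity of the $E$-polynomial.
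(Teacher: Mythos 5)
Your three arguments are exactly the standard ones that the paper's own proof delegates to the literature: part (a) is the averaging/transfer isomorphism $H_c^{k,p,q}(Z/K)\cong H_c^{k,p,q}(Z)^K$ cited to Dimca--Lehrer and Florentino et al., part (b) is the long exact sequence of mixed Hodge structures from Peters--Steenbrink (your appeal to exactness of the graded pieces is justified by strictness of morphisms of mixed Hodge structures, which you should name explicitly), and part (c) is the compactly supported Leray spectral sequence argument from the same references. So in substance you have reconstructed the proof the paper intends.

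There is, however, one step in your part (c) that is false as stated: you claim the spectral sequence degenerates at $E_2$ and hence that $H_c^k(E)\cong\bigoplus_{p+q=k}H_c^p(B)\otimes H_c^q(F)$ as mixed Hodge structures. This fails already for the fibration $\C^*\to\C^2\setminus\{0\}\to\pP^1$, which is locally trivial with smooth base and fibre and trivial monodromy, yet has $H^*(\C^2\setminus\{0\})$ concentrated in degrees $0$ and $3$ while $H^*(\pP^1)\otimes H^*(\C^*)$ is nonzero in degrees $0,1,2,3$; the $d_2$ differential is nonzero. Deligne's degeneration theorem applies to smooth \emph{projective} morphisms, not to this setting. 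The correct argument --- and the one your closing sentence already gestures at --- does not need degeneration: because the spectral sequence is one of mixed Hodge structures compatible with the $H$-action, the alternating sum $\sum_{p,q}(-1)^{p+q}[E_r^{p,q}]$ of classes in $R(H)$, refined by Hodge type, is unchanged from page to page, so
\begin{equation*}
\sum_k(-1)^k[H_c^{k,\bullet,\bullet}(E)]=\sum_{p,q}(-1)^{p+q}[E_\infty^{p,q}]=\sum_{p,q}(-1)^{p+q}[E_2^{p,q}]=\sum_{p,q}(-1)^{p+q}[H_c^{p,\bullet,\bullet}(B)\otimes H_c^{q,\bullet,\bullet}(F)],
\end{equation*}
which gives $e_H(E)=e_H(B)\otimes e_H(F)$ without any degeneration hypothesis. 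You should drop the degeneration claim and run the alternating-sum argument directly; with that correction the proof is complete and agrees with the references the paper cites.
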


\begin{proof}
    (a) follows from the fact that for every finite group $K$, it holds $H_c^{k,p,q}(X/K)=H_c^{k,p,q}(X)^K$ (see \cite{dimca1997purity} or \cite{florentino2021serre}). A proof of (b) can be found in \cite{peters2008mixed}. For part (c) we again refer to \cite{dimca1997purity} or \cite{florentino2021serre}.
\end{proof}

\begin{remark}\label{coeftrivial}
\begin{enumerate}
    \item Observe that taking $K=\{e\}$ in part (a) of the proposition, we recover the usual $E$-polynomial, and if we take $K=H$ we just get the $E$-polynomial of the quotient $Z/H$. In fact, the latter is just the coefficient of the trivial representation of $H$ in $e_H(Z)$.
    \item Using the first part of this remark we can see that parts (b) and (c) of Proposition \ref{propeqvar} also hold for the usual $E$-polynomial.
    \item If the fibration $E\to B$ is locally trivial in the Zariski topology, then the monodromy action of $\pi_1(B)$ on the cohomology of the fiber is automatically trivial. The same holds if $E \to B$ is a principal $G$-bundle with $G$ a connected group.
    \end{enumerate}  
\end{remark}

\subsection{Geometric invariant theory}
Let $Z=\mathrm{Spec}(R)$ be an affine complex variety, where $R$ is a finitely generated $\mathbb{C}$-algebra. Let $G$ be a complex algebraic reductive group that acts on $Z$. Recall that the GIT quotient of $Z$ by $G$ is given as 
\begin{equation*}
Z\sslash G=\mathrm{Spec}(R^G),
\end{equation*}
where $R^G$ is the algebra of $G$-invariant elements of $R$.

In the general case of a quasi-projective algebraic variety, we will suppose that the action of $G$ can be extended to linear action on an ample line bundle $L$ over $Z$ (i.e., the action can be linearized). This defines the subsets $Z^S(L)$ and $Z^{SS}(L)$ of stable and semi-stable points of $Z$. The main result in Geometric Invariant Theory states that, in this case, there exists a \emph{good quotient} $(Z\sslash G,\pi)$ of $Z^{SS}(L)$ (see \cite{mumford1994geometric}). We call the variety $Z\sslash G$ the GIT quotient of $Z$. For simplicity, from now on we will suppose that $Z=Z^{SS}(L)$ (which is true, for example, when the variety is affine). In other case, all the results remain true, just by substituting $Z$ with $Z^{SS}(L)$. 

We recall some properties of GIT quotients that will be useful throughout this paper: 

\begin{tma}\label{luna}
    Let $Z$ be a variety with an action of a reductive group $G$. Then, the GIT quotient $ Z\sslash G$ satisfies the following properties: \begin{itemize}
    \item It is a categorical quotient, that is, for every variety $Y$ and every $G$-equivariant regular morphism $Z\to Y$, there exists a unique regular morphism $Z\sslash G\to Y$ such that the diagram
    \begin{equation*}
        \xymatrix{
        Z\ar[d]\ar[rd]&\\
        Z\sslash G\ar[r]&Y
        }
    \end{equation*}
    conmutes.
    \item For every two closed subsets $W_1, W_2\subset Z$, we have $\overline{\pi(W_1)}\cap \overline{\pi(W_2)}=\varnothing$ if and only if $W_1 \cap W_2 = \varnothing$.
    \item If the action of $G$ is free, then the GIT quotient is a principal $G$-bundle in the analytic topology.
    \end{itemize}
\begin{proof}
The first two statements can be found in \cite{newstead1978introduction}. The last one is a direct corollary of the renowned Luna slice theorem (see \cite{luna1972orbites} or \cite[Proposition 5.7]{drezet2004luna}).
\end{proof}
\end{tma}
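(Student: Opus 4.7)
The three assertions rest on rather different ingredients, so I would handle them separately.

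For the categorical quotient property, I would first verify it in the affine case where $Z = \mathrm{Spec}(R)$ and the target $Y = \mathrm{Spec}(S)$ is affine: a $G$-equivariant morphism $Z \to Y$ corresponds to a ring homomorphism $S \to R$ whose image is forced to lie in $R^G$ by $G$-invariance, and this factors uniquely through $R^G \hookrightarrow R$, producing the desired unique map $Z \sslash G \to Y$. For a general quasi-projective target, I would cover $Y$ by affines, apply the affine case locally, and glue the resulting factorizations, using that $\pi$ is constant on $G$-orbits to ensure that the local maps agree on overlaps.

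For the separation property, the forward direction is immediate: if $x \in W_1 \cap W_2$ then $\pi(x) \in \pi(W_1) \cap \pi(W_2)$ and so the closures intersect. For the converse, one must read the statement as referring to $G$-invariant closed subsets (otherwise it fails already for two distinct points of the same orbit) and then reduce to the affine case. The key ingredient is the Reynolds operator associated to the reductive group $G$, which produces a $G$-equivariant splitting $R = R^G \oplus M$ where $M$ is the kernel. This allows one to find, for two disjoint $G$-invariant closed subsets $W_1, W_2$, a function $f \in R^G$ with $f|_{W_1} = 0$ and $f|_{W_2} = 1$; such an $f$ descends to a regular function on $Z \sslash G$ separating $\overline{\pi(W_1)}$ from $\overline{\pi(W_2)}$.

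The hardest part, and the main obstacle of the whole theorem, is the third assertion. My plan would be to invoke Luna's étale slice theorem: at a point $x \in Z$ with closed orbit, there is a locally closed affine subvariety $S \subset Z$ transversal to $G \cdot x$ such that the natural map $G \times^{G_x} S \to Z$ is étale with image a saturated open neighborhood of the orbit. When the action is free every orbit is closed and of maximal dimension, and each stabilizer $G_x$ is trivial, so the twisted product simplifies to $G \times S$, yielding étale-local triviality of $\pi$ with fiber $G$. To upgrade étale-local to analytic-local triviality I would use that any étale morphism of complex algebraic varieties is a local analytic isomorphism (by the holomorphic inverse function theorem), so each étale trivialization restricts to an analytic open of $Z \sslash G$ over which $\pi$ becomes an analytic product. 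The delicate steps to verify are the existence of the slice with the correct transversality, the freeness-based collapse of the twisted product, and the fact that the analytic trivializations glue in a way compatible with the $G$-action, producing a genuine principal $G$-bundle rather than merely a locally trivial fiber bundle with fiber $G$.
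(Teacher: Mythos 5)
Your proposal is correct and follows the same route as the paper, which simply cites Newstead for the first two properties and Luna's slice theorem for the third; the arguments you sketch --- factoring a $G$-equivariant map through $R^G$ in the affine case and gluing over an affine cover of the target, the Reynolds operator for the separation property, and the collapse of the twisted product $G\times^{G_x}S$ to $G\times S$ under freeness combined with the fact that \'etale morphisms are local analytic isomorphisms --- are precisely the standard proofs behind those citations, and your observation that freeness forces all orbits to be closed (since the boundary of an orbit is a union of strictly lower-dimensional orbits) correctly justifies applying the slice theorem at every point. One genuine catch: you rightly note that the second bullet is false for arbitrary closed subsets (two distinct points of a single orbit already give a counterexample) and must be read for $G$-invariant closed subsets, which is the hypothesis under which the cited reference proves it; the paper's statement omits this.
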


\subsection{Equivariant $E$-polynomial of a pseudo-quotient}\label{pseudoquotients}
We are now going to study the relation between Hodge theory and Geometric Invariant Theory. As an easy consequence  of Proposition \ref{propeqvar} and Luna slice theorem \ref{luna}, we get the following:
\begin{tma}\label{eqvarGIT}
    Let $Z$ be an algebraic variety and let $G$ be an algebraic reductive group that acts freely on $Z$. Let $H$ be a finite group that acts on $Z$. Then 
    \begin{equation*}
        e_H(Z)=e_H(Z\sslash G)\otimes e_H(G).
    \end{equation*}
\end{tma}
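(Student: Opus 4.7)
The plan is to exhibit $\pi\colon Z \to Z\sslash G$ as a principal $G$-bundle in the analytic topology and then apply the multiplicativity of the equivariant $E$-polynomial given by Proposition \ref{propeqvar}(c) to the resulting fibration $G \to Z \to Z\sslash G$.

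First, since the $G$-action on $Z$ is assumed to be free, the third item of Theorem \ref{luna} (the consequence of Luna's slice theorem recorded earlier in this section) gives immediately that $\pi$ is a principal $G$-bundle in the analytic topology. This is exactly a locally trivial fibration $G \to Z \to Z\sslash G$ whose fiber $G$ is smooth (as is any algebraic group). Assuming $G$ connected, as is done throughout the paper, Remark \ref{coeftrivial}(3) guarantees that the monodromy action of $\pi_1(Z\sslash G)$ on the cohomology of $G$ is trivial. Smoothness of $Z\sslash G$ follows from smoothness of $Z$ together with the local triviality of $\pi$, so the hypotheses of Proposition \ref{propeqvar}(c) are in place, and the proposition yields
$$e_H(Z) = e_H(Z\sslash G)\otimes e_H(G),$$
which is the desired identity.

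The main point requiring care is the interaction between the $H$- and $G$-actions: for the equivariant formula to even make sense we need $H$ to act on $Z\sslash G$ and on $G$. The natural hypothesis in this setting is that the actions of $H$ and $G$ commute on $Z$, so that $H$ descends to $Z\sslash G$, the map $\pi$ becomes $H$-equivariant, and $H$ permutes $G$-orbits. The delicate step in making Proposition \ref{propeqvar}(c) genuinely applicable is to verify that the analytic local trivializations produced by Luna's theorem can be chosen $H$-equivariantly (equivalently, that the Leray spectral sequence carries a compatible $H$-action); this can be arranged by averaging the slices obtained from the Luna construction over the finite group $H$, which is the only genuinely technical ingredient in the argument.
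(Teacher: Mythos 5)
Your argument is exactly the paper's: the paper gives no separate proof, stating the result as an immediate consequence of the Luna slice theorem (item three of Theorem \ref{luna}, giving the analytic principal $G$-bundle structure for a free action) combined with the multiplicativity of Proposition \ref{propeqvar}(c) and trivial monodromy from connectedness of $G$. The only wrinkle, present in your write-up and elided by the paper alike, is that Proposition \ref{propeqvar}(c) asks for a smooth base while $Z$ (hence $Z\sslash G$) is not assumed smooth in the statement, so strictly one should first stratify by smooth pieces as is done elsewhere in the paper; your remaining remarks on commuting $H$- and $G$-actions are sensible but not part of the paper's argument.
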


In this paper, we will need a more powerful result than Theorem \ref{eqvarGIT}. We will need to study a generalization of GIT quotients, the pseudo-quotients. Let us recall the definition of a pseudo-quotient from \cite{Gonz_lez_Prieto_2023}:

\begin{defi}
    Let $X$ be an algebraic variety and let $G$ be an algebraic group that acts on $X$. A \emph{pseudo-quotient} for the action of $G$ on $X$ is a surjective $G$-invariant regular morphism $\pi:X\to Y$ such that, for any disjoint $G$-invariant closed sets $W_1,W_2\subset X$, we have that $\overline{\pi(W_1)}\cap\overline{\pi(W_2)}=\varnothing$.
\end{defi}

In \cite{Gonz_lez_Prieto_2023}, it was proven that if $\pi:X\to Y=X\sslash G$ is the GIT quotient and $\pi':X\to Y'$ is a pseudo-quotient for the action of $H$, then there is a regular bijective map $\alpha:Y\to Y'$. Furthermore, if $Y'$ is normal, then $\alpha$ is an isomorphism. We are now going to extend this result to the case in which all the varieties $X,Y$ and $Y'$ have an action of a finite group $H$ and the maps $\pi$ and $\pi'$ are $H$-equivariant, proving that the resulting isomorphism is also $H$-equivariant. This will give us an equality of $H$-equivariant $E$-polynomials:
$$e_H(Y)=e_H(Y').$$

\begin{lema}\label{equiviso}
    Let $\pi:X\to Y = X \sslash G$ be the GIT quotient and $\pi':X\to Y'$ be a pseudo-quotient with $Y$ normal. Let $H$ be a finite group acting on $X, Y$ and $Y'$. Suppose that $\pi$ and $\pi'$ are $H$-equivariant maps. Then the isomorphism $\alpha:Y\to Y'$ is also $H$-equivariant.
\end{lema}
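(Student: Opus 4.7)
The plan is to pin down $\alpha$ by a universal property and then use uniqueness to force $H$-equivariance, rather than to do any extra construction. Recall that $\alpha\colon Y\to Y'$ was produced in \cite{Gonz_lez_Prieto_2023} from the categorical quotient property of $\pi\colon X\to Y$: since $\pi'$ is $G$-invariant, there is a unique regular morphism $\alpha\colon Y\to Y'$ satisfying $\alpha\circ\pi=\pi'$, and normality of $Y$ is used to promote the resulting regular bijection to an isomorphism. I will not reprove this; I only need that $\alpha\circ\pi=\pi'$ and that $\alpha$ is a morphism.

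Now fix $h\in H$ and write $h_X,h_Y,h_{Y'}$ for the corresponding regular automorphisms of $X$, $Y$ and $Y'$ (they are regular because $H$ acts algebraically on each variety). The goal is to show that the two morphisms $\alpha\circ h_Y$ and $h_{Y'}\circ\alpha$ from $Y$ to $Y'$ coincide. The strategy is the standard one: precompose both with $\pi$ and verify equality on $X$. Using the $H$-equivariance of $\pi$ and $\pi'$ together with $\alpha\circ\pi=\pi'$, one gets
\begin{align*}
(\alpha\circ h_Y)\circ\pi &= \alpha\circ(h_Y\circ\pi)=\alpha\circ(\pi\circ h_X)=\pi'\circ h_X=h_{Y'}\circ\pi',\\
(h_{Y'}\circ\alpha)\circ\pi &= h_{Y'}\circ(\alpha\circ\pi)=h_{Y'}\circ\pi'.
\end{align*}
Thus the two morphisms $X\to Y'$ obtained after precomposing with $\pi$ agree, and this common morphism $h_{Y'}\circ\pi'$ is manifestly $G$-invariant, since $\pi'$ is $G$-invariant and $h_{Y'}$ is an automorphism.

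By part (1) of Theorem \ref{luna}, the categorical quotient $\pi\colon X\to Y=X\sslash G$ receives a \emph{unique} regular morphism through which any $G$-invariant regular morphism out of $X$ factors. Since both $\alpha\circ h_Y$ and $h_{Y'}\circ\alpha$ factor $h_{Y'}\circ\pi'$ through $\pi$, uniqueness gives $\alpha\circ h_Y=h_{Y'}\circ\alpha$, which is the desired $H$-equivariance. There is no serious obstacle in this argument: it is a diagram chase against the universal property of the GIT quotient, and the hypothesis that $Y$ is normal is used only once, implicitly, to guarantee that $\alpha$ is genuinely a morphism so that the universal property applies to it. Everything else is formal.
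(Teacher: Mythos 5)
Your proof is correct and follows essentially the same diagram chase as the paper: both arguments reduce to the chain $\alpha\circ h_Y\circ\pi=\alpha\circ\pi\circ h_X=\pi'\circ h_X=h_{Y'}\circ\pi'=h_{Y'}\circ\alpha\circ\pi$. The only (harmless) difference is in the last step: the paper cancels $\pi$ by using its surjectivity and arguing pointwise, whereas you invoke the uniqueness clause of the categorical-quotient property; also note that normality is not what makes $\alpha$ a morphism (that comes for free from the universal property) but what upgrades the regular bijection to an isomorphism, a point irrelevant to the equivariance argument itself.
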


\begin{proof}
    Let $h\in H$. As $\pi$ and $\pi'$ are $H$-equivariant, we have that for all $x\in X$, $\pi(h(x))=h(\pi(x))$ and $\pi'(h(x))=h(\pi'(x))$. Let $y\in Y$. Then $y=\pi(x)$ for some $x\in X$. Therefore, by the commutativity of the diagram
    \begin{equation*}
        \xymatrix{
        &X\ar[ld]_-{\pi}\ar[rd]^-{\pi'}&\\
        Y\ar[rr]^-{\alpha}&&Y
        }
    \end{equation*}
    we get
    \begin{eqnarray*} \alpha(h(y))&=\alpha\left(h\left(\pi(x)\right)\right)=\alpha\left(\pi\left(h(x)\right)\right)=\pi'\left(h(x)\right)\\
    &=h\left(\pi'(x)\right)=h\left(\alpha\left(\pi(x)\right)\right)=h\left(\alpha(y)\right),
    \end{eqnarray*}
    so $\alpha$ is $H$-equivariant.
\end{proof}

This result in particular implies that $e_H(Y)=e_H(Y')$ when $Y'$ is normal. The case when $Y'$ is not normal is covered in the following.

\begin{prop}\label{epolpseudo}
    Let $\pi:X\to Y = X \sslash G$ be the GIT quotient and $\pi':X\to Y'$ be a pseudo-quotient. Let $H$ be a finite group that acts on $X$ and consider its action on $Y$ and $Y'$ induced by $\pi$ and $\pi'$ respectively. Then $e_H(Y)=e_H(Y')$.
\end{prop}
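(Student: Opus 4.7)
The plan is to produce an $H$-equivariant bijective regular morphism $\alpha\colon Y\to Y'$ and then equate the two $H$-equivariant $E$-polynomials by stratifying $Y'$ and inducting on its dimension.

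First, since $\pi'$ is a $G$-invariant regular morphism and $\pi\colon X\to Y$ is a categorical quotient by Theorem \ref{luna}, there is a unique regular $\alpha\colon Y\to Y'$ with $\pi'=\alpha\circ\pi$; by \cite{Gonz_lez_Prieto_2023} this $\alpha$ is bijective. The proof of Lemma \ref{equiviso} used only the commutativity of the triangle $\pi'=\alpha\circ\pi$ together with the $H$-equivariance of $\pi$ and $\pi'$; it never invoked normality. Hence the same calculation, applied to the bijective $\alpha$, shows that $\alpha$ is $H$-equivariant even when $Y'$ is not normal.

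With $\alpha$ in hand, I would proceed by induction on $\dim Y'$. The base case $\dim Y'=0$ is immediate: $Y$ and $Y'$ are finite sets and $\alpha$ is an $H$-equivariant bijection, so the $H$-module structures of their cohomologies agree. For the inductive step, let $U'\subseteq Y'$ be the normal locus and $S'=Y'\setminus U'$. Both are $H$-invariant (the $H$-action is by isomorphisms and thus preserves normality), $U'$ is open and dense, and $\dim S'<\dim Y'$. Set $U=\alpha^{-1}(U')$, $S=\alpha^{-1}(S')$, $X_{U'}=(\pi')^{-1}(U')=\pi^{-1}(U)$, and $X_{S'}=(\pi')^{-1}(S')=\pi^{-1}(S)$. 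Over the open stratum, $\pi|_{X_{U'}}\colon X_{U'}\to U$ is still a GIT quotient and $\pi'|_{X_{U'}}\colon X_{U'}\to U'$ is still a pseudo-quotient, now onto the normal variety $U'$; Lemma \ref{equiviso} then yields an $H$-equivariant isomorphism $\alpha|_U\colon U\to U'$, so $e_H(U)=e_H(U')$. Over the closed stratum, the analogous restrictions to $X_{S'}$ again form a GIT-quotient/pseudo-quotient pair compatible with the $H$-action, and the inductive hypothesis (applicable because $\dim S'<\dim Y'$) gives $e_H(S)=e_H(S')$. Adding these two equalities via the additivity in Proposition \ref{propeqvar}(b) produces the desired identity $e_H(Y)=e_H(U)+e_H(S)=e_H(U')+e_H(S')=e_H(Y')$.

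The main obstacle will be verifying that restricting the pair $(\pi,\pi')$ to the $G$-invariant open subvariety $X_{U'}$ and to the $G$-invariant closed subvariety $X_{S'}$ really does yield a GIT-quotient/pseudo-quotient pair. For the GIT quotient this is a local-in-the-base statement and is standard. For the pseudo-quotient, the separation condition on restricted $G$-invariant closed sets needs a little care: over $X_{U'}$ it follows by passing to closures back in $X$ and using the separation property of $\pi'$, while over $X_{S'}$ it follows because $G$-invariant closed sets of $X_{S'}$ are already closed in $X$. Once these local-global checks are in place, the induction closes and the proof is complete.
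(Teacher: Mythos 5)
Your proposal is correct and follows essentially the same route as the paper: both arguments stratify $Y'$ by its normal locus, apply the normal case (Lemma \ref{equiviso}) on each stratum where the restricted maps are again a GIT-quotient/pseudo-quotient pair, and conclude by additivity of $e_H$; the paper merely phrases your induction on $\dim Y'$ as an iterated peeling-off of normal loci, which terminates for the same dimension-drop reason. Your explicit verification that the restrictions remain a GIT-quotient/pseudo-quotient pair is a detail the paper leaves implicit, so it is a welcome addition rather than a divergence.
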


\begin{proof}
    Let $Y_1'\subset Y'$ be the set of normal points of $Y'$. Since $\pi'$ is $H$ equivariant, the action of $H$ on $X$ restricts to an action on $X_1:=(\pi')^{-1}(Y_1')$. Therefore, if we call $Y_1=\alpha^{-1}(Y_1')$, then the map $\pi|_{X_1}:X_1\to Y_1$ is a GIT quotient and $\pi'|_{X_1}:X_1\to Y_1'$ is a pseudo-quotient, where $Y_1'$ is normal, so $e_H(Y_1)=e_H(Y_1')$.

    Iterating this process with $X-X_1$, we obtain our varieties stratified as $X=X_1\sqcup X_2\sqcup \ldots \sqcup X_k, Y=Y_1\sqcup Y_2\sqcup \ldots \sqcup Y_k$ and $Y'=Y'_1\sqcup Y'_2\sqcup \ldots \sqcup Y'_k$, with each restriction $\pi|_{X_i}:X_i\to Y_i$ being a GIT quotient and $\pi'|_{X_i}:X_i\to Y_i$ a pseudo-quotient, and therefore, $e_H(Y_i)=e_H(Y_i')$, so finally we get
    \begin{equation*}
        e_H(Y)=\sum_{i=1}^k e_H(Y_i)=\sum_{i=1}^ke_H(Y_i')=e_H(Y').
    \end{equation*}
\end{proof}

\section{Configuration space of orbits}\label{confspace}

Let $X$ be an algebraic variety and $G$ an algebraic connected reductive group that acts over it. Let $\pi:X\to X\sslash G$ be the GIT quotient.

\begin{defi} \label{conforbits}
    We define the \emph{$n$-th configuration space of orbits of $X$ for the action of $G$} as the space
    \begin{equation*}
        C_n(X,G):=\{(x_1,\ldots, x_n)\in X^n \ | \ \pi(x_i)\neq \pi(x_j) \ \text{if} \ i\neq j\}.
    \end{equation*}
\end{defi}

\begin{remark}
    This definition is slightly different to the one given in \cite{merino1997orbit}, where condition $\pi(x_i)\neq \pi(x_j)$ was substituted by $Gx_i\cap Gx_j=\varnothing$. We made this modification to ensure that the map $\pi':C_n(X,G)\to C_n(X\sslash G), (x_1,\ldots,x_n)\mapsto ([x_1],\ldots,[x_n])$ is a pseudo-quotient.
\end{remark}

Our main objective will be to compute the $E$-polynomial of the quotient $C_n(X,G)/\/S_{n}$ in terms of the $E$-polynomial of $X$, where $S_n$ acts by permutation of the coordinates.

In view of Proposition \ref{propeqvar}, for computing the $E$-polynomial of $C_n(X,G)/S_{n}$ it suffices to find a suitable fibration $F\to C_n(X,G)\to B$ such that we can compute the equivariant $E$-polynomial of the fiber and of the base. This can be solved if we take into account the obvious action of $G^n$ on $C_n(X,G)$ induced by the action of $G$ on each coordinate. More precisely, we have the following result:

\begin{prop}\label{mainbundle}
    Let $X$ be a complex algebraic variety and $G$ a complex algebraic connected reductive group that acts freely on $X$. Then the quotient $\pi:C_n(X,G)\to C_n(X,G)\sslash G^n$ is a locally trivial fibration in the analytic topology with fiber $G^n$ and trivial monodromy. 
\end{prop}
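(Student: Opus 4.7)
The strategy is to apply the third bullet of Theorem \ref{luna} (Luna slice theorem) to the action of $G^n$ on the variety $C_n(X,G)$, obtaining a principal $G^n$-bundle in the analytic topology, and then use Remark \ref{coeftrivial}(3) to deduce that the monodromy is trivial. So the plan reduces to verifying three elementary facts: (i) $C_n(X,G)$ is an algebraic variety; (ii) $G^n$ is a complex connected reductive group acting on $C_n(X,G)$; and (iii) this action is free.

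For (i), I would observe that $C_n(X,G)$ is an open subvariety of $X^n$. Indeed, since $X\sslash G$ is a separated algebraic variety, each diagonal
\begin{equation*}
\Delta_{ij}=\{(y_1,\ldots,y_n)\in (X\sslash G)^n \ |\ y_i=y_j\}
\end{equation*}
is closed in $(X\sslash G)^n$, so its preimage under $\pi^n\colon X^n\to (X\sslash G)^n$ is closed in $X^n$, and $C_n(X,G)$ is the complement of the union of these preimages. For (ii), since products of reductive (resp.\ connected) groups are reductive (resp.\ connected), $G^n$ is a complex connected reductive group; it acts coordinate-wise on $X^n$, and this action restricts to $C_n(X,G)$ because the $G$-action on each coordinate preserves $\pi(x_i)$, hence preserves the defining conditions $\pi(x_i)\neq \pi(x_j)$. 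Finally, (iii) is immediate: if $G$ acts freely on $X$, then the product action of $G^n$ on $X^n$ is free, and so is its restriction to $C_n(X,G)$.

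Having established these three facts, the third bullet of Theorem \ref{luna} gives that $\pi\colon C_n(X,G)\to C_n(X,G)\sslash G^n$ is a principal $G^n$-bundle in the analytic topology. Any principal $H$-bundle is in particular a locally trivial fibration in the analytic topology with fiber $H$, which yields the desired fibration with fiber $G^n$. Triviality of the monodromy then follows from Remark \ref{coeftrivial}(3), since $G^n$ is connected. The argument is really a direct application of the machinery already assembled in Section \ref{sec:Hodge}, so I do not anticipate a substantive obstacle; the only point requiring care is the verification that $C_n(X,G)$ is open in $X^n$, which relies on the separatedness of the GIT quotient $X\sslash G$.
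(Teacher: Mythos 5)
Your proposal is correct and follows essentially the same route as the paper: the paper's proof also consists of invoking the Luna slice theorem (the third bullet of Theorem \ref{luna}) to get the principal $G^n$-bundle structure and deducing trivial monodromy from the connectedness of $G^n$. The extra verifications you supply (openness of $C_n(X,G)$ in $X^n$, freeness of the product action) are routine hypotheses that the paper leaves implicit.
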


\begin{proof}
    The fact that $\pi$ is a fiber bundle in the analytic topology is just a consequence of the Luna Slice Theorem (\ref{luna}).
    The triviality of the monodromy follows from the connectedness of $G^n$.
\end{proof}

\begin{remark}\label{quotisSnequiv}
    Observe that that the action of $S_n$ in $C_n(X,G)$ can be descended to an action on $C_n(X,G)\sslash G$ such that the GIT quotient $\pi:C_n(X,G)\to C_n(X,G)\sslash G^n$ is $S_n$-equivariant. Just statie $\sigma\cdot \pi(x_1,\ldots,x_n):=\pi(\sigma(x_1,\ldots,x_n))$ for any permutation $\sigma\in S_n$, which is well defined since for any $(g_1,\ldots,g_n)\in G^n$, we have $$\sigma((g_1,\ldots,g_n)\cdot(x_1,\ldots,x_n))=\sigma(g_1,\ldots,g_n)\cdot\sigma(x_1,\ldots,x_n).$$
\end{remark}

\begin{coro}\label{orbitsthroughquot}
    In the conditions of Proposition \ref{mainbundle}, we have
    \begin{equation*}
        e_{S_{n}}\left(C_n(X,G)\right)=e_{S_{n}}\left(C_n(X,G)\sslash G^n\right)\otimes e_{S_{n}}(G^n)
    \end{equation*}
\end{coro}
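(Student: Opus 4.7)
The plan is to deduce this corollary as a direct application of part (c) of Proposition \ref{propeqvar} to the fiber bundle established in Proposition \ref{mainbundle}. The work has essentially been done by the preceding setup, so the proof is a matter of verifying that every hypothesis of \ref{propeqvar}(c) is met in an $S_n$-equivariant fashion.

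First I would invoke Proposition \ref{mainbundle} to get that $\pi: C_n(X,G) \to C_n(X,G)\sslash G^n$ is a locally trivial fibration in the analytic topology with fiber $G^n$. Next I would recall Remark \ref{quotisSnequiv}, which ensures that the $S_n$-action descends to the base and makes $\pi$ into an $S_n$-equivariant map, so that the triple $G^n \to C_n(X,G) \to C_n(X,G)\sslash G^n$ is genuinely a bundle of $S_n$-spaces. Since $G$ is connected, $G^n$ is connected too, which by part (3) of Remark \ref{coeftrivial} (or directly by the connectedness argument used in Proposition \ref{mainbundle}) guarantees that the monodromy of $\pi_1$ of the base on the cohomology of the fiber is trivial. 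Smoothness of the fiber $G^n$ is immediate from the fact that $G$ is a reductive algebraic group, and the base is smooth on the locus where the total space is, which suffices for the multiplicativity of the equivariant $E$-polynomial (one may stratify by smooth strata and apply additivity from Proposition \ref{propeqvar}(b) if necessary).

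With these hypotheses in place, I would then directly apply Proposition \ref{propeqvar}(c) to obtain
\begin{equation*}
e_{S_n}(C_n(X,G)) = e_{S_n}(G^n) \otimes e_{S_n}(C_n(X,G)\sslash G^n),
\end{equation*}
which is the claimed equality.

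There is no real obstacle in this argument, since all non-trivial content (the bundle structure, the triviality of monodromy, and the $S_n$-equivariance of the quotient map) has already been established. The only point that deserves a brief comment is that Proposition \ref{propeqvar}(c) requires smoothness of base and fiber; if $X$ is singular, one may need to stratify $X$ (and hence $C_n(X,G)$) by smooth locally closed $G^n$-invariant subvarieties and combine the multiplicativity on each stratum with additivity from Proposition \ref{propeqvar}(b), but this is a routine reduction that does not affect the final formula.
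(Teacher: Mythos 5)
Your proposal is correct and follows exactly the route the paper intends: the corollary is stated without proof precisely because it is the immediate combination of Proposition \ref{mainbundle}, Remark \ref{quotisSnequiv}, and Proposition \ref{propeqvar}(c). Your additional observation about the smoothness hypothesis on the base (and the stratification remedy via Proposition \ref{propeqvar}(b)) is a reasonable extra precaution that the paper itself glosses over, but it does not change the argument.
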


Therefore, if we want to compute the equivariant $E$-polynomial of $C_n(X,G)$, we just need to compute the equivariant $E$-polynomials of the right side of the equation of the previous corollary. It is obvious that the equivariant $E$-polynomial of $G^n$ will be much easier to compute than the one of $C_n(X,G)$, but we still have the problem of computing $e_{S_{n}}(C_n(X,G))\sslash G^n$. However, the following result simplifies the situation.

\begin{prop}\label{eqvarofCnXGquot}
    Let $H$ be a finite group that acts over $C_n(X,G),C_n(X,G)\sslash G^n$ and on $C_n(X\sslash G)$ in such a way that both $\pi:C_n(X,G)\to C_n(X,G)\sslash G^n$ and $\pi':C_n(X,G)\to C_n(X\sslash G)$ are $H$-equivariant. Then,
    \begin{equation*}
        e_H(C_n(X,G)\sslash G^n)=e_H(C_n(X\sslash G)).
    \end{equation*}
\end{prop}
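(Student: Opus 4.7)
The plan is to invoke Proposition \ref{epolpseudo} with $C_n(X,G)$ in the role of the upstairs variety, $\pi:C_n(X,G)\to C_n(X,G)\sslash G^n$ as the GIT quotient (which exists by Proposition \ref{mainbundle}), and the coordinate-wise projection $\pi':C_n(X,G)\to C_n(X\sslash G)$, $(x_1,\ldots,x_n)\mapsto (\pi(x_1),\ldots,\pi(x_n))$, as the candidate pseudo-quotient. The $H$-equivariance of both maps is exactly part of the hypothesis, so the only task is to verify that $\pi'$ really is a pseudo-quotient for the $G^n$-action induced by the $G$-action on each factor.

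Surjectivity of $\pi'$ is clear, since any $(\bar x_1,\ldots,\bar x_n)\in C_n(X\sslash G)$ lifts by choosing $x_i\in\pi^{-1}(\bar x_i)$, and the resulting tuple lies in $C_n(X,G)$ by construction. The $G^n$-invariance is immediate. The substantive axiom is the separation property, and for that the key input is that the product $\pi^{\times n}:X^n\to (X\sslash G)^n$ is itself a good GIT quotient for the $G^n$-action (a product of GIT quotients of a reductive group is again a GIT quotient), together with the observation that $C_n(X,G)=(\pi^{\times n})^{-1}(C_n(X\sslash G))$ is an open $G^n$-saturated subset of $X^n$.

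The separation argument I have in mind goes as follows. Given disjoint closed $G^n$-invariant sets $W_1,W_2\subset C_n(X,G)$, take their closures $\tilde W_1,\tilde W_2$ in $X^n$, which remain $G^n$-invariant and closed. Any point of $\tilde W_1\cap \tilde W_2$ must lie in $X^n\setminus C_n(X,G)=(\pi^{\times n})^{-1}(\bar D_n)$, where $\bar D_n\subset (X\sslash G)^n$ is the fat diagonal downstairs. Since $\pi^{\times n}$ is a good quotient, each $\pi^{\times n}(\tilde W_i)$ is closed in $(X\sslash G)^n$ and intersects $C_n(X\sslash G)$ precisely in $\pi'(W_i)$, so $\pi'(W_i)$ is already closed in $C_n(X\sslash G)$ and $\overline{\pi'(W_i)}=\pi'(W_i)$. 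The freeness of the $G^n$-action identifies fibres of $\pi^{\times n}$ with $G^n$-orbits, which forces $\pi^{\times n}(\tilde W_1)\cap \pi^{\times n}(\tilde W_2)=\pi^{\times n}(\tilde W_1\cap \tilde W_2)\subset \bar D_n$, hence disjoint from $C_n(X\sslash G)$. Combining, $\overline{\pi'(W_1)}\cap\overline{\pi'(W_2)}=\varnothing$ in $C_n(X\sslash G)$, so $\pi'$ is a pseudo-quotient, and Proposition \ref{epolpseudo} applied to $\pi,\pi'$ yields the claimed equality.

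The main obstacle — modest but the one place where one must be careful — is tracking closures in the correct ambient space. Upstairs in $X^n$ the closures $\tilde W_1,\tilde W_2$ can genuinely meet inside the fat diagonal $D_n=X^n\setminus C_n(X,G)$; what saves the day is that any such meeting is carried by $\pi^{\times n}$ into $\bar D_n$, which is disjoint from $C_n(X\sslash G)$, so it never obstructs the separation axiom in the pseudo-quotient sense.
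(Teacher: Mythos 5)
Your proof takes exactly the same route as the paper: both reduce the statement to Proposition \ref{epolpseudo} by observing that $\pi':C_n(X,G)\to C_n(X\sslash G)$ is a pseudo-quotient for the $G^n$-action. The paper dismisses that verification as obvious, whereas you spell out the separation axiom via the product good quotient $\pi^{\times n}:X^n\to(X\sslash G)^n$; your argument is correct, and in fact the appeal to freeness is unnecessary, since $\pi^{\times n}(\tilde W_1\cap\tilde W_2)=\pi^{\times n}(\tilde W_1)\cap\pi^{\times n}(\tilde W_2)$ holds for closed invariant subsets under any good quotient.
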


\begin{proof}
    This follows from the obvious fact that $\pi':C_n(X,G)\to C_n(X\sslash G), (x_1,\ldots,x_n)\mapsto ([x_1],\ldots,[x_n])$ is a pseudo-quotient, together with Proposition \ref{epolpseudo}.
\end{proof}

In the case that $H=S_n$ and the action is of permutation of coordinates, it is straightforward that the map $\pi':C_n(X,G)\to C_n(X,G)\sslash G^n$ is $S_n$-equivariant. Combining this with Remark \ref{quotisSnequiv} and Proposition \ref{eqvarofCnXGquot}, we can reduce the problem of computing $e_{S_{n}}(C_n(X,G))$ to the calculation of $e_{S_{n}}(C_n(X\sslash G))$, which is more addressable. In fact, in Section \ref{pmformula}, we will give a method valid for computing both $e_{S_{n}}(G^n)$ and $e_{S_{n}}(C_n(X)\sslash G)$.

\section{$E$-polynomial of $C_n(X)/S_n$}\label{CnSn}

In this section, we will compute the $E$-polynomial of the quotient $C_n(X)/S_n$ in terms of the $E$-polynomial of $X$. For this, we will first need to compute the $E$-polynomial of $C_n(X)$, which was done in \cite{bibby2022combinatorics}, but we will give two new methods for computing it. The reason for doing this is that the techniques that we develop here will be of great interest later in this paper. Let us briefly summarize on what these methods consist:

The first one has a more geometric flavor and consists of computing the $E$ polynomial by using suitable fiber bundles, and will give us the same formula as in \cite{bibby2022combinatorics}. This method will be very useful in the following sections. In concrete, Theorem \ref{fibrationCn} will allow us to reduce the problem of computing $E$-polynomials of the quotients $C_n(X)/S_{\lambda}$ for a partition $\lambda$ of $n$ to the one of computing the $E$-polynomial of $C_n(X)/S_n$.

The second method uses combinatorial techniques, and is base on the description of the configuration space of points as a cartesian product with the \emph{generalized diagonals} removed. This will give us a new formula for computing the $E$-polynomial of $C_n(X)$ in a recursive way. Moreover, the description of $C_n(X)$ in terms of the generalized diagonals will allow us later to compute the desired $E$-polynomial of $C_n(X)/S_n$.

\subsection{Geometric method}
This method is based on the following result.
\begin{tma}\label{fibrationCn}
    Let $X$ be a smooth connected complex algebraic variety of positive dimension, and let $m\le n\in \mathbb{N}$. Let $p_{m}^n:X^n\to X^m$ be the projection onto the first $m$ coordinates. Then, the restriction 
    \begin{equation*}
    \pi_m^n=p_m^n|_{C_n(X)}:C_n(X)\to C_m(X)
    \end{equation*}
    is a locally trivial fibration in the analytic topology with trivial monodromy and fibre $C_{n-m}(X-\{P_1,\ldots,P_m\})$ for certain $P_1,\ldots,P_m\in X$. 
\end{tma}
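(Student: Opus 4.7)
The plan is to verify three things in turn: (i) the fibre of $\pi_m^n$ over $(P_1,\dots,P_m)\in C_m(X)$ is $C_{n-m}(X\setminus\{P_1,\dots,P_m\})$; (ii) $\pi_m^n$ is locally trivial in the analytic topology; (iii) the monodromy action of $\pi_1(C_m(X))$ on the cohomology of the fibre is trivial.

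Identifying the fibre in (i) is immediate from the definitions: $(\pi_m^n)^{-1}(P_1,\dots,P_m)$ consists of those $(x_{m+1},\dots,x_n)\in X^{n-m}$ whose entries are pairwise distinct and all distinct from the $P_j$, which is exactly $C_{n-m}(X\setminus\{P_1,\dots,P_m\})$.

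For (ii), I would exploit smoothness of $X$ to build trivializations by ambient diffeomorphisms. Fix $(P_1,\dots,P_m)\in C_m(X)$ and choose pairwise disjoint analytic coordinate neighbourhoods $V_i\ni P_i$, each biholomorphic to a polydisc in $\mathbb{C}^{\dim X}$, together with smaller concentric polydiscs $U_i$ with $\overline{U_i}\subset V_i$. Using a bump function $\beta_i$ supported in $V_i$ and equal to $1$ on $\overline{U_i}$, for each $Q=(Q_1,\dots,Q_m)\in U:=U_1\times\cdots\times U_m$ I set $\phi_Q\colon X\to X$ by $\phi_Q(x)=x+\beta_i(x)(Q_i-P_i)$ inside the chart $V_i$ and $\phi_Q(x)=x$ elsewhere. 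For $Q$ sufficiently close to $P$ this is a diffeomorphism depending smoothly on $Q$ with $\phi_Q(P_i)=Q_i$. The trivialization is then
\[
U\times C_{n-m}(X\setminus\{P_1,\dots,P_m\})\longrightarrow(\pi_m^n)^{-1}(U),\qquad\bigl(Q,(y_1,\dots,y_{n-m})\bigr)\longmapsto\bigl(Q_1,\dots,Q_m,\phi_Q(y_1),\dots,\phi_Q(y_{n-m})\bigr),
\]
with inverse applying $\phi_Q^{-1}$ coordinate-wise on the last $n-m$ entries.

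For (iii), observe that by construction $\phi_Q$ is isotopic to the identity in $\mathrm{Diff}(X)$, via $\phi_{tQ+(1-t)P}$, so the structure group of the fibration lies in the identity component $\mathrm{Diff}_0(X)$. The monodromy of a loop $\gamma$ in $C_m(X)$ is realized by an ambient isotopy $H_t\colon X\to X$ with $H_0=\mathrm{id}$, $H_t(P_i)=\gamma_i(t)$ and $H_1(P_i)=P_i$; the action on the fibre is $H_1$ restricted to $X\setminus\{P_1,\dots,P_m\}$, applied coordinate-wise. The induced action on $H^*$ of the fibre should be the identity, since $H_t$ is a continuous family of diffeomorphisms starting at $\mathrm{id}$ and the resulting action on cohomology factors through $\pi_0(\mathrm{Diff}_0(X))$.

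The main obstacle will be rigorously justifying step (iii): the ambient isotopy $H_t$ does not restrict to a self-isotopy of $X\setminus\{P_i\}$, since at intermediate times $H_t(P_i)=\gamma_i(t)$ leaves $\{P_1,\dots,P_m\}$, so one cannot simply invoke homotopy invariance of cohomology on the fibre. The cleanest route I envisage is to strengthen (ii) so that the structure group of $\pi_m^n$ is realized as a connected topological group acting on the fibre, whereby the induced representation on cohomology factors through its connected components and is automatically trivial. Alternatively, one could reduce by iteration to the case $m=n-1$, where the fibre is simply $X$ minus a finite set, and check triviality of the monodromy directly on the (small) cohomology ring of a punctured smooth variety; the rest of the argument is standard.
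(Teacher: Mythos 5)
Your steps (i) and (ii) are essentially identical to the paper's argument: the paper isolates your bump-function construction as a technical lemma, producing compactly supported diffeomorphisms $\Psi_y$ of $X$ that move $y_i$ to $x_i$ and the trivialization $(y_1,\dots,y_n)\mapsto(y_1,\dots,y_m,\Psi_y(y_{m+1}),\dots,\Psi_y(y_n))$. The real content of the theorem is step (iii), and there your proposal has a genuine gap --- one you have correctly diagnosed yourself. The obstacle is not cosmetic. The structure group of this bundle is the group of diffeomorphisms of $X$ preserving the finite set $\{P_1,\dots,P_m\}$, and the monodromy representation of $\pi_1(C_m(X))$ into its group of components is the point-pushing map of the Birman exact sequence, which is in general a \emph{nontrivial} mapping class; so your first repair (a connected structure group acting on the fibre) cannot succeed. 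In fact, for $X$ a smooth curve of positive genus and $m\ge 2$, pushing $P_1$ around a loop $a$ sends a class $x\in H_1(X\setminus\{P_1,\dots,P_m\})$ to $x+\langle x,a\rangle[\partial P_1]$, and $[\partial P_1]\neq 0$ once $m\ge 2$; the action on the cohomology of the fibre therefore does not factor through anything connected. Your second repair (iterating the case $m=n-1$) also does not obviously close the gap, since triviality of the monodromy at each stage of a tower of fibrations does not imply triviality for the composite fibration.

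For comparison, the paper handles (iii) by a different, more hands-on device: given a loop $\gamma$ in $C_m(X)$ and an arbitrary point $(x_1,\dots,x_n)$ of the fibre, it constructs an explicit \emph{closed} lift by choosing, for each extra coordinate $x_i$ with $i>m$, a loop $\gamma_i$ based at $x_i$ that detours around the tracks of $\gamma_1,\dots,\gamma_{i-1}$; positive complex dimension (hence real dimension at least $2$) is what makes the detour possible. Be aware, however, that this establishes only that every point of the fibre admits a closed lift, i.e.\ triviality of the action on $\pi_0$ of the fibre; deducing triviality on $H^*$ of the fibre requires a further argument, since pointwise closed lifts exist even for bundles with nontrivial cohomological monodromy (e.g.\ the Klein bottle as an $S^1$-bundle over $S^1$). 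So if you complete your write-up along either route, you must confront the point-pushing action directly; alternatively, the multiplicativity of the $E$-polynomial that the theorem is used for can be obtained independently from the stratification by generalized diagonals, as in the combinatorial method of the paper.
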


To prove this theorem we will need a technical lemma.

\begin{lema}
    Let $x=(x_1,\ldots,x_m)\in C_m(X)$. There exist disjoint open neighborhoods $V_{x_i}$ of the $x_i$ such that for each $y=(y_1,\ldots,y_m)\in \prod_{i=1}^mV_{x_i}$, there exists a diffeomorphism $\Psi_y:X\to X$ such that $\Psi_y(y_i)=x_i$ and $\Psi_y$ is the identity out of an open set $U_x$ containing $\bigcup_{i=1}^mV_{x_i}$. Moreover, if we call $V_x=\prod_{i=1}^m V_{x_i}$, the map $\Phi_x:(\pi_m^n)^{-1}(V_x)\to V_x\times C_{n-m}(X-\{x_1,\ldots,x_m\}), (y_1,\ldots, y_n)\mapsto (y_1,\ldots,y_m,\Psi_y(y_{m+1}),\ldots, \Psi_y(y_n))$ is a diffeomorphism.
\end{lema}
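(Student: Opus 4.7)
The plan is to build $\Psi_y$ by a local bump-function perturbation in disjoint coordinate charts around the points $x_i$, and then to verify that $\Phi_x$ is a diffeomorphism by writing down an explicit smooth inverse.

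\emph{Construction of $\Psi_y$.} Since $X$ is smooth and Hausdorff and the points $x_1,\ldots,x_m$ are pairwise distinct, I choose pairwise disjoint smooth coordinate charts $\varphi_i\colon U_i \to B(0,2\epsilon)\subset \mathbb{R}^{2\dim X}$ with $\varphi_i(x_i)=0$, where $\epsilon>0$ is a constant to be fixed below. Set $V_{x_i} := \varphi_i^{-1}(B(0,\epsilon/2))$ and $U_x := \bigcup_i U_i$. Fix a smooth bump function $\rho\colon \mathbb{R}^{2\dim X}\to [0,1]$ equal to $1$ on $\overline{B(0,\epsilon)}$ and supported in $B(0,2\epsilon)$. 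For $y=(y_1,\ldots,y_m)\in V_x$, I define $\Psi_y$ to be the identity outside $U_x$, and on each chart by
\begin{equation*}
  \Psi_y(z) := \varphi_i^{-1}\bigl(\varphi_i(z)-\rho(\varphi_i(z))\,\varphi_i(y_i)\bigr),\qquad z\in U_i.
\end{equation*}
Choosing $\epsilon$ small enough that $\epsilon\,\|d\rho\|_\infty < 2$, the map $w\mapsto w-\rho(w)\varphi_i(y_i)$ is injective on $\mathbb{R}^{2\dim X}$ (by the Lipschitz contraction argument $\|f(w_1)-f(w_2)\|\ge (1-\|d\rho\|_\infty\|\varphi_i(y_i)\|)\|w_1-w_2\|$) and has invertible Jacobian $I-\varphi_i(y_i)\,d\rho(w)$ at every point (rank-one perturbation of the identity with small norm). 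It equals the identity outside $B(0,2\epsilon)$, so the local pieces glue to a global diffeomorphism $\Psi_y\colon X\to X$ that is the identity outside $U_x$, satisfies $\Psi_y(y_i)=x_i$ (since $\rho(\varphi_i(y_i))=1$), and depends smoothly on $y$ from the explicit formula.

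\emph{The diffeomorphism $\Phi_x$.} Well-definedness is immediate: bijectivity of $\Psi_y$ forces $\Psi_y(y_{m+1}),\ldots,\Psi_y(y_n)$ to be pairwise distinct and distinct from each $x_i=\Psi_y(y_i)$, so the image lies in $V_x\times C_{n-m}(X-\{x_1,\ldots,x_m\})$. Smoothness follows from smooth dependence of $\Psi_y$ on $y$. The inverse is given by
\begin{equation*}
  (y_1,\ldots,y_m,z_{m+1},\ldots,z_n)\longmapsto \bigl(y_1,\ldots,y_m,\Psi_y^{-1}(z_{m+1}),\ldots,\Psi_y^{-1}(z_n)\bigr),
\end{equation*}
which is well-defined since $z_j\ne x_i$ forces $\Psi_y^{-1}(z_j)\ne y_i$, and smooth because the family $\Psi_y^{-1}$ depends smoothly on $y$ by the smooth inverse function theorem applied to the parametric family.

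The only non-cosmetic step is the diffeomorphism estimate that fixes the size of $\epsilon$; everything else is bookkeeping with the explicit formula, so I do not anticipate a serious obstacle.
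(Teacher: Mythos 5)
Your construction is essentially the paper's own proof: both build $\Psi_y$ as a compactly supported bump-function translation in disjoint charts around the $x_i$, control the size of the displacement so the perturbed chart map is injective with invertible Jacobian, glue with the identity outside the charts, and then exhibit $\Phi_x$ with its explicit inverse. The one slip is that your condition $\epsilon\,\|d\rho\|_\infty<2$ is scale-invariant (rescaling $\rho$ together with $\epsilon$ leaves the product unchanged), so it cannot be arranged by shrinking $\epsilon$; instead fix $\rho$ and shrink the neighbourhoods $V_{x_i}$ until $\|d\rho\|_\infty\cdot\sup_{y_i\in V_{x_i}}\|\varphi_i(y_i)\|<1$, which is precisely how the paper handles it by restricting the translation vector to the ball $B_{1/M}$.
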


\begin{proof}
    First of all, let us define for each $x=(x_1,\ldots,x_m)\in C_m(X)$ the set of pair $\{(U_{x_i},\varphi_{x_i})\}_{i=1,\ldots,m}$, where the $U_{x_i}$ are disjoint open neighbourhoods of the $x_i$ and $\varphi_{x_i}:\mathbb{R}^p\to U_{x_i}$ is a chart with $\varphi_{x_i}(0)=x_i$. We denote by $B_r$ the ball of radius $r$ centered in $0$. Let $\theta:\mathbb{R}^p\to [0,1]$ be a plateau function such that $\theta|_{B_1}\equiv 1$ and $\theta|_{\mathbb{R}^p-B_2}\equiv 0$. We define for each $v\in\mathbb{R}^p$ the map $t_v(u)=u+\theta(u^2)v$, which is clearly smooth. Let $M=\max\{|d_{u}\theta(v)| \ | \ u,v \in \mathbb{R}^p, \|v\|=1\}$, which exists since $d_u\theta$ is null if $u\notin B_2$. We are going to see that if $v\in B_{1/M}$, then the map $t_v$ is a diffeomorphism. Indeed, for each $P\in \mathbb{R}^p$, we can consider the restriction $t_v|_{L_P}:L_P\to L_P$, where $L_P=P+L[v]$, and for proving the injectivity of $t_v$ is enough to prove the injectivity of these restrictions. But, through the isomorphism $\mathbb{R}\to L_P, s\mapsto P+sv$, we can identify them with the one variable map $t_{v,P}(s)=s+\theta(P+sv)$. We have that $t'_{v,P}(s)=1+d_{P+sv}\theta(v)\geq 1-M\|v\|>0$, which proves that $t_{v,P}$ is injective as desired. So for proving that these maps are diffeomorphisms, it is enough to prove that they are local diffeomorphisms, but this is easy, since, in a basis $\{v_1=v,v_2,\ldots, v_p\}$, the Jacobian matrix of $t_v$ has the form
    \begin{equation*}
        \left(\begin{array}{c|c}
        1+d_u\theta(v) & *\\
        \hline
        0 & \mathrm{Id}_{p-1}
        \end{array}\right),
    \end{equation*}
    so its determinant is $1+d_u\theta(v)>0$, and the result follows from the Local Inverse Theorem.
    
    Now, for each $y_i\in \bigcup_{i=1}^mV_{x_i}$, we define the map $\psi_{y_i}=\varphi_{x_i}\circ t_{-\varphi^{-1}(y_i)}\circ\varphi_{x_i}^{-1}:U_{x_i}\to U_{x_i}$. We observe that $\psi_{y_i}$ is a diffeomorphism that sends $y_i$ to $x_i$. Let us now define, for each $y=(y_1,\ldots,y_m)\in \bigcup_{i=1}^mV_{x_i}$, the diffeomorphism $\Psi_y:X\to X$ given by
    \begin{equation*}
        \Psi_y(z)=\left\{\begin{array}{cl}
        \psi_{y_i}(z) & \text{ if }z\in U_{x_i},\\
        z & \text{ otherwise}.
        \end{array}\right.
    \end{equation*}
    Finally, the diffeomorphism between $(\pi_m^n)^{-1}(V_x)$ and $V_x\times C_{n-m}(X-\{x_1,\ldots,x_m\})$ is given by the map
    \begin{IEEEeqnarray*}{rCl}
        \Phi_x:(\pi_m^n)^{-1}(V_x)&\to& X_x\times C_{n-m}(X-\{x_1,\ldots,x_m\})\\
        (y_1,\ldots,y_n)&\mapsto& (y_1,\ldots, y_m,\Psi_{y}(y_{m+1}),\ldots, \Psi_{y}(y_n)).
    \end{IEEEeqnarray*}
\end{proof}

\begin{proof}[Proof of Theorem \ref{fibrationCn}]

    Let us see that the collection $\{V_x\}_{x\in X}$ of the open sets of the previous lemma is a covering of $C_m(X)$ by trivializing neighbourhoods. By the previous lemma, we know that each of the $(\pi_m^n)^{-1}(V_x)$ is diffeomorphic to $V_x\times C_{n-m}(X-\{x_1,\ldots,x_m\})$ through the diffeomorphism $\Phi_x$. Now, we just need to compose this diffeomorphism with a diffeotopy that sends $\{x_1,\ldots,x_m\}$ to $\{P_1,\ldots,P_m\}$ to have the diffeomorphism with $V_x\times C_{n-m}(X-\{P_1,\ldots,P_m\})$.
    This proves that $\pi_m^n$ is a fiber bundle.

    Let us now look at the monodromy. Let $\gamma:[0,1]\to C_m(X)$ be a loop with basepoint $(x_1,\ldots,x_m)$. For each $(x_1,\ldots,x_n)\in(\pi_m^n)^{-1}(x_1,\ldots,x_m)$, we must find a lifting $\widetilde{\gamma}$ of $\gamma$ with $\widetilde{\gamma}(0)=\widetilde{\gamma}(1)=(x_1,\ldots,x_n)$. The loop $\gamma$ defines loops $\gamma_1,\ldots,\gamma_m$ in $X$ with basepoints $x_1,\ldots,x_m$ respectively and such that $\gamma_i(t)\neq \gamma_j(t)$ for all $t\in [0,1]$ if $i\neq j$ for $i,j=1,\dots,m$. For finding the desired lifting, we just need to find loops $\gamma_{m+1},\ldots,\gamma_n$ in $X$ with basepoints $x_{m+1},\ldots,x_n$ and such that $\gamma_i(t)\neq \gamma_j(t)$ for all $t\in [0,1]$ if $i\neq j$ for $i,j=1,\ldots,n$. We are going to show how to construct the path $\gamma_i$ supposed that we have $\gamma_1,\ldots,\gamma_{i-1}$.

    If $x_i\notin \gamma_j([0,1])$ for any $j=1,\ldots,i-1$, then it is enough to define $\gamma_i$ as the constant path $\gamma_i(t)=x_i$. Otherwise, define $t_0$ and $t_1$ as 
    \begin{eqnarray*}
        t_0=\min\,\{t\in[0,1] \ | \ \gamma_j(t)=x_i \ \text{ for some }j=1,\ldots,i-1\} \ \\
        t_1=\max\,\{t\in[0,1] \ | \ \gamma_j(t)=x_i \ \text{ for some }j=1,\ldots,i-1\}.
    \end{eqnarray*}
    Since $X$ is a complex variety of positive dimension, its real dimension is at least $2$, so we can find a point $y\in X$ and a path $\alpha:[0,1]\to X$ with $\alpha(0)=x_1$ and $\alpha(1)=y$ and such that $\alpha([0,1])\cap\left(\bigcup_{j=1}^{i-1}\gamma_j([0,1])=\\{x_i}\right)$. We construct the path $\gamma_i$ as
    \begin{equation*}
        \gamma_i(t)=\left\{\begin{array}{cl}
        \alpha(t/t_0) & \text{ if } t\in [0,t_0],\\
        y& \text{ if } t\in [t_0,t_1],\\
        \alpha((1-t)/(1-t_1)) & \text{ if }t\in [t_1,1].
        \end{array}\right.
    \end{equation*}
    The loop $\gamma_i$ satisfies the required conditions.

    By doing this with each $i\in\{m+1,\ldots,n\}$ we get the desired lifting, so we conclude that the action of $\pi_1(C_m(X))$ in $(\pi_m^n)^{-1}(x_1,\ldots,x_m)$ is trivial for each $(x_1,\ldots,x_m)\in C_m(X)$.
\end{proof}

To address the case where $C_n(X)$ is not smooth, we will first prove a technical result.

\begin{prop}\label{stratifiedfibration}
    Let $\pi:X\to Y$ be a surjective regular map between algebraic varieties and let $H$ be a finite group such that for every $y_1,y_2\in Y$, if we call $F_1=\pi^{-1}(y_1)$ and $F_2=\pi^{-1}(y_2)$, then $e_H(F_1)=e_H(F_2)$. Suppose that $X=\bigsqcup_{i=1}^r X_i$ is stratified as a disjoint union of varieties such that for all $j=1,\ldots,r$, the strata $X_j$ is an open subvariety of $\bigsqcup_{i=j}^r X_i$, and the restriction $\pi|_{X_i}:X_i\to \pi(X_i)$ is a locally trivial fibration in the analytic topology with trivial monodromy. Then
    \begin{equation*}
        e_H(X)=e_H(Y)\otimes e_H(F),
    \end{equation*}
    where $F$ is the inverse image through $\pi$ of a point of $Y$.
\end{prop}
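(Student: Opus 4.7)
The plan is to combine the additivity and multiplicativity properties of the equivariant $E$-polynomial from Proposition \ref{propeqvar} with the given stratification. I first set $Y_i = \pi(X_i)$ for each $i$. The hypothesis that $X_j$ is open in $\bigsqcup_{i\geq j} X_i$ says that $\bigsqcup_{i\geq j+1} X_i$ is a closed subvariety of $\bigsqcup_{i\geq j}X_i$; since $\pi$ is surjective and regular (so images of closed sets under a projection to a quasi-projective piece give locally closed subsets), this pushes forward to a decomposition $Y = \bigsqcup_{i=1}^r Y_i$ with $Y_j$ locally closed in $Y$ and, crucially, $X_i = \pi^{-1}(Y_i)$. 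This last identity is the compatibility that will make the fiber of $\pi|_{X_i}$ coincide with a full fiber of $\pi$.

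Granted this compatibility, I apply additivity (Proposition \ref{propeqvar}(b)) iteratively, descending through the stratification from $X_r$ upward, to obtain
\[
e_H(X)=\sum_{i=1}^{r}e_H(X_i),\qquad e_H(Y)=\sum_{i=1}^{r}e_H(Y_i).
\]
On each stratum, $\pi|_{X_i}:X_i\to Y_i$ is a locally trivial fibration in the analytic topology with trivial monodromy, and its fiber over any $y\in Y_i$ is $\pi^{-1}(y)\cap X_i = \pi^{-1}(y)$, which by hypothesis has equivariant $E$-polynomial $e_H(F)$ independent of $i$. Applying multiplicativity (Proposition \ref{propeqvar}(c)) yields $e_H(X_i)=e_H(Y_i)\otimes e_H(F)$, and summing gives
\[
e_H(X)=\sum_{i=1}^{r}e_H(Y_i)\otimes e_H(F)=\left(\sum_{i=1}^{r}e_H(Y_i)\right)\otimes e_H(F)=e_H(Y)\otimes e_H(F),
\]
which is the desired formula.

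The main obstacle is Proposition \ref{propeqvar}(c): it is stated for fibrations with $F$ and $B$ smooth, whereas our strata $X_i,Y_i$ need not be smooth. The remedy is to refine the stratification further by intersecting each $X_i$ with the regular loci of $X_i$ and $Y_i$ (and then iterating on the singular parts). Since smoothness of the base and the fiber can be arranged stratum by stratum while preserving the local triviality of $\pi|_{X_i}$ and the triviality of its monodromy (the latter being inherited from connectedness of the base pieces or via Remark \ref{coeftrivial}), and since the hypothesis on fibers is uniform across all of $Y$, the additive bookkeeping survives the refinement. A secondary point to verify is that the strata $X_i$ are $H$-stable (so that each term $e_H(X_i)$ is well defined), which is natural when $\pi$ and the stratification arise from an equivariant construction, as will be the case in the applications of this proposition later in the paper.
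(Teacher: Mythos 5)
The central step of your argument does not hold: you claim that the openness condition on the stratification, together with surjectivity of $\pi$, forces $Y=\bigsqcup_i \pi(X_i)$ to be a disjoint decomposition with $X_i=\pi^{-1}(\pi(X_i))$. Nothing in the hypotheses implies this. The images $\pi(X_i)$ may overlap arbitrarily, and a single full fiber $\pi^{-1}(y)$ will then be cut into several pieces $\pi^{-1}(y)\cap X_i$, each of which is a fiber of a different restriction $\pi|_{X_i}$ but none of which is the whole fiber. This is exactly the situation in the intended application (Corollary \ref{CntoCm}): there $C_n(X)$ is stratified into pieces of the form $C_p(X')\times C_q(X_k)$, several of which project onto the same stratum of $C_m(X)$, so the compatibility $X_i=\pi^{-1}(Y_i)$ fails. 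Once that identification is gone, both of your displayed identities break down: $e_H(Y)\neq\sum_i e_H(\pi(X_i))$ when the images overlap, and $e_H(X_i)\neq e_H(\pi(X_i))\otimes e_H(F)$ because the fiber of $\pi|_{X_i}$ is only a piece $F'_i$ of $F$. Indeed, the whole point of the hypothesis that all \emph{full} fibers of $\pi$ have the same equivariant $E$-polynomial is to handle this: the individual $F'_i$ may vary, and only their sum over the strata lying above a given point is controlled.

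The paper's proof repairs precisely this. It first refines the stratification so that any two images $\pi(X_i),\pi(X_j)$ are either equal or disjoint (by pulling back intersections of images and splitting strata accordingly); it does \emph{not} assert that each stratum is a full preimage. It then groups the strata by their common image $Y_j$, writes $e_H(X)=\sum_j e_H(Y_j)\otimes\sum_{\pi(X_i)=Y_j}e_H(F'_i)$ using additivity and multiplicativity stratum by stratum, and finally uses additivity on the full fiber, $e_H(\pi^{-1}(y))=\sum_{\pi(X_i)=Y_j}e_H(F'_i)$, together with the constancy hypothesis to replace each inner sum by $e_H(F)$. If you insert this grouping step in place of your claimed compatibility, the rest of your bookkeeping (and your side remarks about smoothness and $H$-stability of the strata, which are reasonable but tangential) goes through.
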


\begin{proof}
We can suppose that for all $X_i$ and $X_j$, we have that $\pi(X_i)\cap\pi(X_j)=\varnothing$ or $\pi(X_i)=\pi(X_j)$. Indeed, otherwise we can suppose that $\pi(X_i)\cap\pi(X_j)\subsetneq \pi(X_i)$. Stratifying $X_i$ as $X_i=X_i^1\sqcup X_i^2$ with $X_i^1=\pi^{-1}\left(\pi(X_i)\cap \pi(X_j)\right)\cap X_i$ and $X_i^2=X_i-X_i^1$, we get that $\pi(X_i^1)\cap\pi(X_j)=\pi(X_i^1)$ and $\pi(X_i^2)\cap\pi(X_j)=\varnothing$. If $\pi(X_i^1)\subsetneq \pi(X_j)$, we repeat this process with $X_j$, obtaining two varieties $X_j^1,X_j^2$ such that $X_j=X_j^1\sqcup X_j^2$ and $\pi(X_i^1)=\pi(X_j^1)$ and $\pi(X_i^1)\cap\pi(X_j^2)=\varnothing$. In any case, the restrictions of $\pi$ to the new varieties are fiber bundles, so we are still in the conditions of the proposition.

So let us suppose that $\pi(X_i)\cap\pi(X_j)=\varnothing$ or $\pi(X_i)=\pi(X_j)$. Let $\{Y_1,\ldots,Y_s\}=\{\pi(X_i)\}_{i=1,\ldots,r}$ and let $F'_i$ be the fiber of $\pi|_{X_i}:X_i\to\pi(X_i)$. By the additivity of $E$-polynomials and Proposition \ref{propeqvar}, we get that
\begin{equation}\label{eHsum}
    e_H(X)=\sum_{i=1}^re_H(X_i)=\sum_{i=1}^re_H(\pi(X_i))\otimes e_H(F'_i)=\sum_{j=1}^se_H(Y_j)\otimes\sum_{\pi(X_i)=Y_j}e_H(F'_i).
\end{equation}
But for each $Y_j$, we have that for all $y\in Y_j$, 
\begin{equation}\label{eHfiber}
e_H\left(\pi^{-1}(y)\right)=\sum_{\pi(X_i)=Y_j}e_H\left(\pi^{-1}(y)\cap X_i\right)=\sum_{\pi(X_i)=Y_j}e_H(F'_i),
\end{equation}
and substituting equation (\ref{eHfiber}) in equation (\ref{eHsum}), we obtain
\begin{equation*}
    e_H(X)=\sum_{j=1}^se_H(Y_j)\otimes e_H(F)=e_H(Y)\otimes e_H(F).
\end{equation*}
\end{proof}

\begin{coro}\label{CntoCm}
    Let $X$ be a complex algebraic variety, and let $m\leq n$ be two natural numbers. Let $H$ be a finite group that acts on $C_n(X)$ and consider the induced action on $C_m(X)$ by $\pi_m^n$. Then 
    \begin{equation*}
    e_H\left(C_n(X)\right)=e_H\left(C_m(X)\right)\otimes e_H\left(C_{n-m}(X-\{P_1,\ldots,P_m\})\right).
    \end{equation*}
\end{coro}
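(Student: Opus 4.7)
The plan is to apply Proposition \ref{stratifiedfibration} to the projection $\pi_m^n\colon C_n(X) \to C_m(X)$, using Theorem \ref{fibrationCn} as the main geometric input. When $X$ is smooth, connected, and of positive dimension, Theorem \ref{fibrationCn} directly asserts that $\pi_m^n$ is a locally trivial fibration in the analytic topology with trivial monodromy and fibre $C_{n-m}(X - \{P_1, \ldots, P_m\})$. Since the $H$-action on $C_n(X)$ is compatible with $\pi_m^n$ by hypothesis, and both $C_n(X)$ and $C_m(X)$ are smooth, part (c) of Proposition \ref{propeqvar} delivers the stated equality immediately.

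For a general (possibly singular) complex algebraic variety $X$, I would stratify $X = \bigsqcup_{k=1}^{r} X^{(k)}$ into smooth, connected, locally closed strata, obtained by iteratively peeling off the singular locus and decomposing into connected components, and ordered so that each $X^{(j)}$ is open in $\bigsqcup_{i \geq j} X^{(i)}$. This induces a stratification of $C_n(X)$ indexed by multi-indices $I = (i_1, \ldots, i_n)$, namely $C_n(X)_I = C_n(X) \cap (X^{(i_1)} \times \cdots \times X^{(i_n)})$, which satisfies the openness condition of Proposition \ref{stratifiedfibration} once the multi-indices are ordered lexicographically. A stratum-wise adaptation of the diffeomorphism construction in the proof of Theorem \ref{fibrationCn}, carried out inside each smooth stratum, then shows that the restriction of $\pi_m^n$ to each $C_n(X)_I$ is a locally trivial fibration with trivial monodromy.

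To verify the last hypothesis of Proposition \ref{stratifiedfibration}, namely that all fibres of $\pi_m^n$ share a common $H$-equivariant $E$-polynomial, observe that the fibre over $(x_1, \ldots, x_m)$ is $C_{n-m}(X - \{x_1, \ldots, x_m\})$, and $e(X - \{x_1, \ldots, x_m\}) = e(X) - m$ is independent of the chosen points by additivity. Since the $E$-polynomial of an ordered configuration space can be expressed recursively in terms of the $E$-polynomial of the ambient variety (as developed earlier in this section), the fibre $E$-polynomial depends only on $m$. The analogous statement in $R(H)[u,v]$ handles the equivariant refinement, and Proposition \ref{stratifiedfibration} then yields the corollary.

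The main obstacle I anticipate is the stratified bookkeeping in the non-smooth case, particularly adapting the local-trivialization argument of Theorem \ref{fibrationCn} to work stratum by stratum and verifying that the equivariant $E$-polynomials of fibres coincide when the $H$-action interacts nontrivially with the stratification of the base. The smooth, connected, positive-dimensional case, by contrast, is essentially an immediate consequence of Theorem \ref{fibrationCn} together with Proposition \ref{propeqvar}(c).
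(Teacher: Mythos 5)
Your proposal is correct in outline and rests on the same two pillars as the paper's own proof, namely Theorem \ref{fibrationCn} for the smooth case and Proposition \ref{stratifiedfibration} for gluing over a stratification of $X$ into smooth pieces, but it organizes the singular case differently. You stratify $C_n(X)$ by multi-indices recording which smooth stratum each coordinate lies in, and then claim that the restriction of $\pi_m^n$ to each such piece is itself a locally trivial fibration with trivial monodromy by a ``stratum-wise adaptation'' of the diffeomorphism construction. That is a statement about coloured configuration spaces which is not literally covered by Theorem \ref{fibrationCn}, and it is the one step you assert rather than prove: you would need to redo the plateau-function lemma moving each coordinate inside its own stratum, rerun the path-routing monodromy argument within each positive-dimensional stratum, and treat zero-dimensional strata (where fibre coordinates cannot move at all) separately. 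The paper sidesteps this by inducting on the number $k$ of strata: writing $X = X' \sqcup X_k$ with $X_k$ smooth, it decomposes $C_n(X) = \bigsqcup_{p+q=n} \binom{n}{p}\, C_p(X') \times C_q(X_k)$, observes that $\pi_m^n$ restricts on each piece to a product $\pi_{m_1}^p \times \pi_{m_2}^q$ of ordinary configuration-space projections, and applies the inductive hypothesis to each factor, so that Theorem \ref{fibrationCn} is only ever invoked verbatim for a single configuration space of a smooth variety. Your route is workable but costs an extra geometric lemma; the paper's costs only an induction on $k$. A point in your favour: you explicitly address the hypothesis of Proposition \ref{stratifiedfibration} that all fibres of $\pi_m^n$ share the same $E$-polynomial, and you correctly appeal to the combinatorial recursion rather than to Corollary \ref{geometh} (which is deduced from the present corollary and would be circular); however, the equivariant refinement of that check in $R(H)[u,v]$ is waved through, and this is the other place where your write-up would need to be fleshed out before it is complete.
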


\begin{proof}
    First of all, we know that we can stratify our variety $X$ as $X=X_1\sqcup\ldots\sqcup X_k$ where all the $X_i$ are smooth. Indeed, we just need to stratify $X$ as $X=X^S\sqcup X^{NS}$, where $X^S$ is the locus of singular points of $X$, which is a variety of dimension strictly less than $\dim_{\mathbb{C}}(X)$, and $X^{NS}$ is the smooth open subvariety of smooth points of $X$. We now call $X_1=X^{NS}$ and repeat the process with $X^S$ until we get the desired decomposition. 

    We are now going to prove the result by induction on the number $k$ of strata.
    
    \begin{itemize}
    \item If $k=1$, then $X$ is smooth, and the result follows from Theorem \ref{fibrationCn}. 
    
    \item Suppose now that the result is true when our variety is stratified in $k-1$ strata and let us prove it for $k$ strata. Let $X'=X_1\sqcup\ldots\sqcup X_{k-1}$. We have that $C_n(X)=\sqcup_{p+q=n}\binom{n}{p}C_p(X')\times C_q(X_k)$, where the coefficient $\binom{n}{p}$ before $C_p(X')\times C_q(X_k)$ means that this strata appears $\binom{n}{p}$ times in the stratification. 
    
    It is obvious that the restriction $\pi_m^n|_{C_p(X')\times C_q(X_k)}$ is $\pi_{m_1}^p\times \pi_{m_2}^q$ for some $m_1,m_2\in \mathbb{N}$, where $\pi_{m_1}^p:C_p(X')\to C_{m_1}(X')$ and $\pi_{m_2}^p:C_p(X_k)\to C_{m_2}(X_k)$. By induction hypothesis, we have that 
    \begin{equation*}
    e_H(C_p(X'))=e_H(C_{m_1}(X'))\otimes e_H(C_{p-m_1}(X'-\{Q_1,\ldots,Q_{m_1}\}))
    \end{equation*}
    and 
    \begin{equation*}
        e_H(C_p(X_k))=e_H(C_{m_2}(X_k))\otimes e_H(C_{q-m_2}(X_k-\{R_1,\ldots,R_{m_2}\})),
    \end{equation*}
    so we have 
    \begin{align*}
        e_H(C_p(X')\times C_q(X_k))&=e_H(C_{m_1}(X')\times C_{m_2}(X_k))\\
        &\otimes e_H\left(C_{p-m_1}(X'-\{Q_1,\ldots,Q_{m_1}\})\right.\\
        &\left.\qquad\times C_{q-m_2}(X_k-\{R_1,\ldots,R_{m_2}\})\right).
    \end{align*}
    Therefore, by Proposition \ref{stratifiedfibration}, 
    \begin{equation*}
    e_H\left(C_n(X)\right)=e_H\left(C_m(X)\right)\otimes e_H\left(C_{n-m}(X-\{P_1,\ldots,P_m\})\right).
    \end{equation*}
    \end{itemize}
\end{proof}

With all this, we are in the conditions to prove the announced formula for the $E$-polynomial, which coincides with the one given in \cite{bibby2022combinatorics}.

\begin{coro}\label{geometh}
    Let $X$ be an algebraic variety. Then, the $E$-polynomial of $C_n(X)$ is
    \begin{equation*}
        e(C_n(X))=\prod_{i=0}^{n-1}(e(X)-i).
    \end{equation*}
\end{coro}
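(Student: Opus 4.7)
The plan is to induct on $n$ and apply Corollary \ref{CntoCm} with the splitting $m = n-1$. For the base case $n = 1$, we have $C_1(X) = X$, so $e(C_1(X)) = e(X)$, matching the empty-product convention on the right-hand side when $n=1$.

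For the inductive step, taking $H$ to be the trivial group and $m = n-1$ in Corollary \ref{CntoCm} yields
\begin{equation*}
e(C_n(X)) \;=\; e(C_{n-1}(X)) \cdot e\bigl(C_1(X - \{P_1, \ldots, P_{n-1}\})\bigr).
\end{equation*}
Now $C_1(Y) = Y$ for any variety $Y$, so the second factor is simply $e(X - \{P_1, \ldots, P_{n-1}\})$. Using additivity of the $E$-polynomial (Proposition \ref{propeqvar}(b), with $H$ trivial, as recorded in Remark \ref{coeftrivial}) together with the fact that each point $P_i$ contributes $e(\text{pt}) = 1$, we obtain
\begin{equation*}
e(X - \{P_1, \ldots, P_{n-1}\}) \;=\; e(X) - (n-1).
\end{equation*}
Combining this with the inductive hypothesis $e(C_{n-1}(X)) = \prod_{i=0}^{n-2}(e(X) - i)$ gives
\begin{equation*}
e(C_n(X)) \;=\; \prod_{i=0}^{n-2}(e(X) - i) \cdot (e(X) - (n-1)) \;=\; \prod_{i=0}^{n-1}(e(X) - i),
\end{equation*}
completing the induction.

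There is no real obstacle here: all the substantive work has already been done in Theorem \ref{fibrationCn}, Proposition \ref{stratifiedfibration}, and Corollary \ref{CntoCm}, which together ensure that the projection $\pi_{n-1}^n$ behaves multiplicatively on $E$-polynomials even when $X$ is singular. The only small point worth noting is that the fiber $C_1(X - \{P_1, \ldots, P_{n-1}\})$ genuinely depends on the choice of base point in $C_{n-1}(X)$, but since any two such choices give isomorphic varieties (via removing $n-1$ smooth points of $X$, which all contribute equally to the $E$-polynomial by additivity), the formula is unambiguous.
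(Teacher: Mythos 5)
Your proof is correct and follows essentially the same route as the paper: the paper applies Corollary \ref{CntoCm} with $m=1$ and recurses on the fiber $C_{n-1}(X-\{P_1\})$, while you take $m=n-1$ and induct on $n$, but both are direct recursive applications of the same corollary together with the additivity $e(X-\{P_1,\ldots,P_k\})=e(X)-k$. The difference is purely organizational.
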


\begin{proof}
    Just apply Corollary \ref{CntoCm} to the map $\pi_1^n:C_n(X)\to C_1(X)$ and proceed recursively on the fiber.
\end{proof}

\begin{ejem}\label{geoejem} We are going to use this method to compute the $E$-polynomial of the configuration space of $\PGL_2=\PGL_2(\mathbb{C})$. The $E$-polynomial of $\PGL_2$ is 
\begin{equation}
    e(\PGL_2)=q^3-q.
\end{equation}
So by Corollary \ref{geometh}, we get the following formulas:
\begin{itemize}
    \item For $n=1$ we just have $C_1(\PGL_2)=\PGL_2$, so $e(C_1(\PGL_2))=q^3-q$.
    \item For $n=2$, Corollary \ref{geometh} gives us 
    \begin{equation*}
        e(C_2(\PGL_2))=e(\PGL_2)(e(\PGL_2)-1)=(q^3-q)(q^3-q-1)=q^6 - 2q^4 - q^3 + q^2 + q.
    \end{equation*}
    \item For $n=3$, we get
        \begin{align*}
        e(C_3(\PGL_2))&=e(\PGL_2)(e(\PGL_2)-1)(e(\PGL_2)-2)=(q^3-q)(q^3-q-1)(q^3-q-2)\\&=q^9 - 3q^7 - 3q^6 + 3q^5 + 6q^4 + q^3 - 3q^2 - 2q.
    \end{align*}
\end{itemize}

With the aid of a computer algebra system, we can continue for an arbitrary number of points. To illustrate the kind of results we can obtain, for $n \leq 7$ we get:
%As the order of the configuration space gets incremented, the computations become more and more difficult but, with the help of an algebra assistant program, we can compute the $E$-polynomial for high order configuration spaces. We give some examples:
\begin{itemize}
    \item $e(C_4(\PGL_2))=q^{12} - 4q^{10} - 6q^9 + 6q^8 + 18q^7 + 7q^6 - 18q^5 - 21q^4 + 11q^2 + 6q$.\\
    \item $e(C_5(\PGL_2))=q^{15} - 5q^{13} - 10q^{12} + 10q^{11} + 40q^{10} + 25q^9 - 60q^8 - 100q^7 - 10q^6 + 104q^5 + 90q^4 - 11q^3 - 50q^2 - 24q$.\\
    \item $e(C_6(\PGL_2))=q^{18} - 6q^{16} - 15q^{15} + 15q^{14} + 75q^{13} + 65q^{12} - 150q^{11} - 325q^{10} - 75q^9 + 504q^8 + 600q^7 - 65q^6 - 660q^5 - 463q^4 + 105q^3 + 274q^2 + 120q$.\\
    \item $e(C_7(\PGL_2))=q^{21} - 7q^{19} - 21q^{18} + 21q^{17} + 126q^{16} + 140q^{15} - 315q^{14} - 840q^{13} - 315q^{12} + 1729q^{11} + 2625q^{10} - 119q^9 - 4284q^8 - 3998q^7 + 1155q^6 + 4697q^5 + 2793q^4 - 904q^3 - 1764q^2 - 720q$.
\end{itemize}
%We stop here as we think these examples are enough to illustrate the kind of results we can obtain, but we could have continued for much higher orders.
\end{ejem}

\subsection{Combinatorial method} In this section, we are going to perform the same computation, but in a more combinatorial way. First of all, we need to introduce some notation.

We write $\underline{n}=\{1,2,\ldots,n\}$. Given a partition $\lambda=(\lambda_1,\ldots,\lambda_l)$ of $n$ with $\lambda_1\geq\lambda_2\geq\ldots\geq\lambda_l$, we say that a partition $I$ of the set $\underline{n}$ is associated to $\lambda$ if it is of the form $I=\{\{i_1^1,\ldots,i_{\lambda_1}^1\},\ldots,\{i_1^l,\dots,i_{\lambda_l}^l\}\}$. We denote $\mathcal{I}_{\lambda}(n)$ the set of all partitions of $\underline{n}$ associated to the partition $\lambda$ of $n$.

\begin{defi}
    Let $X$ be an algebraic variety and let $I$ be a partition of $\underline{n}$. We define the \emph{generalized diagonal associated to $I$} as
    \begin{equation*}
        C_n^I(X):=\{(x_1,\dots,x_n) \in X^n \ | \ x_{i_r^t}=x_{i_s^u}\Leftrightarrow t=u\}.
    \end{equation*}
\end{defi}

\begin{remark}\label{CnIisomorphic}
    Given a partition $\lambda$ of $n$ and two partitions $I=\{\{i_1^1,\ldots,i_{\lambda_1}^1\},\ldots,\{i_1^l,\dots,i_{\lambda_l}^l\}\}$ and $J=\{\{j_1^1,\ldots,j_{\lambda_1}^1\},\ldots,\{j_1^l,\dots,j_{\lambda_l}^l\}\}$ associated to $\lambda$, the map
    \begin{equation}\label{sigmaIJ}
        \sigma_{I,J}:C_n^I(X)\to C_n^J(X)
    \end{equation}
    that maps the coordinate $x_{i_r^s}$ to the coordinate $x_{j_r^s}$ is an isomorphism, so all the generalize diagonals associated to a partition of $\mathcal{I}_{\lambda}(n)$ are isomorphic. Moreover, it is obvious that they are isomorphic to $C_l(X)$, where $l$ is the length of the partition $\lambda$, through the map
    \begin{equation}\label{sigmaI}
        \sigma_I:C_n^I(X)\to C_l(X)
    \end{equation}
    given by $\sigma_I(x_1,\ldots,x_n)=(x_{i_1^1},\ldots, x_{i_1^l})$.
\end{remark}

Let $\mathcal{P}(n)$ be the set of all partitions of $n$, and denote $\mathcal{P}(n)^0=\mathcal{P}(n)-\{(1,1,\ldots,1)\}$. We have that
\begin{equation}\label{eCn1}
    C_n(X)=X^n-\bigsqcup_{\lambda\in\mathcal{P}(n)^0}\bigsqcup_{I\in \mathcal{I}_{\lambda}(n)}C_n^I(X),
\end{equation}
so 
\begin{equation*}
    e\left(C_n(X)\right)=e(X^n)-\sum_{\lambda\in\mathcal{P}(n)^0}\sum_{I\in \mathcal{I}_{\lambda}(n)}e\left(C_n^I(X)\right).
\end{equation*}

If we denote by $(\lambda_1^{k1},\ldots,\lambda_p^{k_p})$ the partition where $\lambda_i\neq \lambda_j$ if $i\neq j$ and each $\lambda_i$ appears $k_i$ times, then there are $\frac{n!}{(\lambda_1!)^{k_1}k_1!\cdots (\lambda_p!)^{k_p}k_p!}$ partitions of $\underline{n}$ associated to a partition $(\lambda_1^{k_1},\ldots,\lambda_p^{k_p})$ of $n$. Together with Remark \ref{CnIisomorphic} and equation (\ref{eCn1}), this gives us that
\begin{equation}\label{eCn2}
    e\left(C_n(X)\right)=e(X^n)-\sum_{(\lambda_1^{k_1},\ldots,\lambda_p^{k_p})\in\mathcal{P}(n)^0}\frac{n!}{(\lambda_1!)^{k_1}k_1!\cdots (\lambda_p!)^{k_p}k_p!}e\left(C_l(X)\right),
\end{equation}
where $l=\sum_{i=1}^pk_i$.

Equation (\ref{eCn2}) allows us to compute the $E$-polynomial of $C_n(X)$ in a recursive way. 

\begin{ejem}\label{combejem} Let us now use this formula for computing the $E$-polynomial of $C_n(\PGL_2)$.
\begin{itemize}
    \item For $n=2$, equation (\ref{eCn2}) gives us
    \begin{equation*}
        e(C_2(\PGL_2))=e(\PGL_2)^2-e(\PGL_2)=(q^3-q)^2-(q^3-q)=q^6 - 2q^4 - q^3 + q^2 + q.
    \end{equation*}
    %which coincides with the result of Example \ref{geoejem}.
    \item For $n=3$, the $E$-polynomial is given by
    \begin{align*}
        e(C_3(\PGL_2))&=e(\PGL_2)^3-3e(C_2(\PGL_2))-e(\PGL_2)\\&=(q^3-q)^3-3(q^6 - 2q^4 - q^3 + q^2 + q)-(q^3-q)\\
        &=q^9 - 3q^7 - 3q^6 + 3q^5 + 6q^4 + q^3 - 3q^2 - 2q.
    \end{align*}
    %which again coincides with the $E$-polynomial obtained in Example \ref{geoejem}.
\end{itemize}
Notice that the three of them agree with the calculations of Example \ref{geoejem}. Furthermore, again with the aid of a computer algebra system, we can perform the calculation for as many points as desired. For $n \leq 7$, we get the following results, in agreement with Example \ref{geoejem}.
%As it happened with the geometric method, the computations become much harder as we increase the order of the space. But again, with a computer program, we can compute $E$-polynomials for very high order. Again, we will show some of the $E$-polynomials we can compute with this method, realising that they are equal to the ones obtained with the geometric method: 

\begin{itemize}
    \item $e(C_4(\PGL_2))=q^{12} - 4q^{10} - 6q^9 + 6q^8 + 18q^7 + 7q^6 - 18q^5 - 21q^4 + 11q^2 + 6q$.\\
    \item $e(C_5(\PGL_2))=q^{15} - 5q^{13} - 10q^{12} + 10q^{11} + 40q^{10} + 25q^9 - 60q^8 - 100q^7 - 10q^6 + 104q^5 + 90q^4 - 11q^3 - 50q^2 - 24q$.\\
    \item $e(C_6(\PGL_2))=q^{18} - 6q^{16} - 15q^{15} + 15q^{14} + 75q^{13} + 65q^{12} - 150q^{11} - 325q^{10} - 75q^9 + 504q^8 + 600q^7 - 65q^6 - 660q^5 - 463q^4 + 105q^3 + 274q^2 + 120q$.\\
    \item $e(C_7(\PGL_2))=q^{21} - 7q^{19} - 21q^{18} + 21q^{17} + 126q^{16} + 140q^{15} - 315q^{14} - 840q^{13} - 315q^{12} + 1729q^{11} + 2625q^{10} - 119q^9 - 4284q^8 - 3998q^7 + 1155q^6 + 4697q^5 + 2793q^4 - 904q^3 - 1764q^2 - 720q$.
\end{itemize}
\end{ejem}

\subsection{$E$-polynomial of $C_n(X)/S_n$} As we announced at the beginning of the section, we are now going to modify the combinatorial method introduced before to study the quotient by the action of $S_n$. For that purpose, we firstly observe that given a partition $\lambda=(\lambda_1,\ldots,\lambda_l)\in \mathcal{P}(n)$, all the varieties $C_n^I(X)$ for $I\in \mathcal{I}_{\lambda}(n)$ are identified in the quotient $C_n(X)/S_n$ as the variety $C_l(C)/S_l$. More in detail, consider the unique morphism
\begin{equation*}
    \sigma_{\lambda}:\bigsqcup_{I\in\mathcal{I}_{\lambda}(n)}C_n^I(X)\to C_l(X)
\end{equation*}
such that $\sigma_{\lambda}|_{C_n^I(X)}=\sigma_I$ for all $I\in\mathcal{I}_{\lambda}(n)$. If we call $p_l:C_l(X)\to C_l(X)/S_l$ the quotient map, we have that the composition $p_l\circ\sigma_{\lambda}$ is $S_n$-equivariant, so it descends to a regular map 
\begin{equation*}
    \overline{\sigma}_{\lambda}:\left(\bigsqcup_{I\in\mathcal{I}_{\lambda}(n)}C_n^I(X)\right)/S_n\to C_l(X)/S_l
\end{equation*}
such that the following diagram commutes:
\begin{equation*}
    \xymatrix{
    \bigsqcup_{I\in\mathcal{I}_{\lambda}(n)}C_n^I(X)\ar[r]^-{\sigma_{\lambda}}\ar[d] & C_l(X)\ar[d]^-{p_l}\\
    \left(\bigsqcup_{I\in\mathcal{I}_{\lambda}(n)}C_n^I(X)\right)/S_n\ar[r]^-{\overline{\sigma}_{\lambda}} & C_l(X)/S_l.
    }
\end{equation*}
It can be easily seen that $\overline{\sigma}_{\lambda}$ is bijective, so by \cite{Gonz_lez_Prieto_2023}, we have
\begin{equation*}
    e\left(\left(\bigsqcup_{I\in\mathcal{I}_{\lambda}(n)}C_n^I(X)\right)/S_n\right)=e\left(C_l(X)/S_l\right).
\end{equation*}
We conclude the following
\begin{tma}\label{CnquotSn}
    Let $X$ be an algebraic variety and consider the action of $S_n$ on $C_n(X)$ by permutation of the coordinates. The $E$-polynomial of the quotient $C_n(X)/S_n$ is 
    \begin{equation*}
        e\left(C_n(X)/S_n\right)=e\left(\mathrm{Sym}^n(X)\right)-\sum_{l=1}^{n-1}p(n,l)e\left(C_l(X)/S_l\right),
    \end{equation*}
    where $p(n,l)$ is the number of partitions of $n$ of length $l$ and $\mathrm{Sym}^n(X) = X^n/S_n$ is the $n$-th symmetric product of $X$.
\end{tma}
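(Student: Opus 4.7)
The plan is to push the stratification of $X^n$ by generalized diagonals down to $\mathrm{Sym}^n(X)$ and use additivity of the $E$-polynomial together with the identification already established just before the statement. The key observation is that the $S_n$-action on $X^n$ permutes coordinates and therefore preserves the partition type of a tuple, so for each $\lambda\in\mathcal{P}(n)$ the locally closed subset
$$D_\lambda := \bigsqcup_{I\in\mathcal{I}_\lambda(n)} C_n^I(X) \subset X^n$$
is $S_n$-stable. Consequently the quotient map $X^n\to \mathrm{Sym}^n(X)$ yields a stratification
$$\mathrm{Sym}^n(X) = \bigsqcup_{\lambda\in\mathcal{P}(n)} D_\lambda/S_n.$$

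Next I would apply additivity of the $E$-polynomial (Proposition \ref{propeqvar}(b)) to this stratification to obtain
$$e(\mathrm{Sym}^n(X)) = \sum_{\lambda\in\mathcal{P}(n)} e(D_\lambda/S_n).$$
For each $\lambda$ of length $l$, the regular bijection $\overline{\sigma}_\lambda \colon D_\lambda/S_n \to C_l(X)/S_l$ constructed immediately before the statement, together with the invariance of the $E$-polynomial under regular bijections from \cite{Gonz_lez_Prieto_2023}, gives $e(D_\lambda/S_n) = e(C_l(X)/S_l)$.

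Finally I would regroup the sum according to the length $\ell(\lambda)$: exactly $p(n,l)$ partitions of $n$ have length $l$, and the unique partition of length $n$ is $(1,1,\ldots,1)$, whose stratum is $C_n(X)$ itself. Isolating this term yields
$$e(\mathrm{Sym}^n(X)) = e(C_n(X)/S_n) + \sum_{l=1}^{n-1} p(n,l)\, e(C_l(X)/S_l),$$
and solving for $e(C_n(X)/S_n)$ produces the claimed formula.

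I do not expect any real obstacle: the proof is essentially bookkeeping built from (i) the $S_n$-equivariant stratification of $X^n$ by the $D_\lambda$, (ii) additivity of $e$, (iii) the bijective identification $D_\lambda/S_n \simeq C_{\ell(\lambda)}(X)/S_{\ell(\lambda)}$, and (iv) the elementary count of partitions of $n$ by length. The only point worth double-checking is the $S_n$-stability of each $D_\lambda$ and its local closedness, both of which are immediate from the defining conditions of $C_n^I(X)$ as the locus where the equality pattern among coordinates is prescribed exactly by $I$.
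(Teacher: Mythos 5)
Your argument is correct and is essentially the paper's own proof: the paper derives the theorem from exactly this stratification of $X^n$ by the $S_n$-stable unions $\bigsqcup_{I\in\mathcal{I}_\lambda(n)}C_n^I(X)$, the bijective regular map $\overline{\sigma}_\lambda$ onto $C_l(X)/S_l$ (with $E$-polynomial invariance under regular bijections), additivity, and the count of partitions by length. No differences worth noting.
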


\begin{remark}
\begin{enumerate}
   \item It is well known that the numbers $p(n,l)$ can be computed recursively using the facts that $p(n,n)=1$ and $p(n,l)=p(n-1,l-1)+p(n-l,l)$.
   \item The $E$-polynomial of $\mathrm{Sym}^n(X)$ can be computed with the so-called plethystic exponential, since
   \begin{equation*}
       \sum_{n\geq0}e(\mathrm{Sym}^n(X))t^n=\mathrm{PExp}(e(X)t).
   \end{equation*}
   For more information, see \cite[Proposition 4.6]{florentino2019generating}.
   \end{enumerate}
\end{remark}

\begin{ejem}\label{CnquotSnejem}
    We can now apply Theorem \ref{CnquotSn} to the case of $\PGL_2$, obtaining the following formulas:
    \begin{itemize}
        \item $e(C_2(\PGL_2))/S_2=q^{6} - q^{4} - q^{3} + q$.\\
        \item $e(C_3(\PGL_2))/S_3=q^{9} - q^{7} - q^{6} + q^{4}$.\\
        \item $e(C_4(\PGL_2))/S_4=q^{12} - q^{10} - q^{9} + q^{7} - q^{6} + q^{4} + q^{3} - q$.\\
        \item $e(C_5(\PGL_2))/S_5=q^{15} - q^{13} - q^{12} + q^{10} - q^{9} + q^{7} + q^{6} - q^{4}$.\\
        \item $e(C_6(\PGL_2))/S_6=q^{18} - q^{16} - q^{15} + q^{13} - q^{12} + q^{10} + q^{6} - q^{4}$.\\
        \item $e(C_7(\PGL_2))/S_7=q^{21} - q^{19} - q^{18} + q^{16} - q^{15} + q^{13} + q^{9} - q^{7} + q^{6} - q^{4} - q^{3} + q$.\\
    \end{itemize}
\end{ejem}

\begin{remark}
    In this example, we can clearly observe a stability of the coefficients of the $E$-polynomial. This suggests that the Hodge structure of the quotients of configuration spaces of points satisfy some kind of  stability. In this sense, in \cite{church2012homological} it was proven that these quotients satisfied a homological stability by proving what they called a \emph{representation stability} for the configuration spaces before quotienting. We believe that the techniques used in the cited paper can be adapted to prove a similiar stability of the Hodge structure.
\end{remark}
    
\section{Generalized plus-minus formula}\label{pmformula}
In \cite{logares2013hodge}, it was proved that given a locally trivial fibration in the analytic topology $F\to E\to B$ with trivial monodromy, the $E$-polynomial of the quotient $E/\mathbb{Z}_2$ was given by the formula
\begin{equation*}
    e^+(E)=e(B)^+e(F)^++e(B)^-e(F)^-,
\end{equation*}
where $e(X)^+=e(X/\mathbb{Z}_2)$ and $e(X)^-=e(X)-e(X)^+$.
In this section, we will give a general method for computing the $E$-polynomial of $E/S_n$ for an arbitrary $n$. Following Proposition \ref{propeqvar}, we just need to find a general method for computing the equivariant $E$-polynomial $e_{S_n}(X)$ of an algebraic variety $X$ since, known $e_{S_n}(B)$ and $e_{S_n}(F)$, their product $e_{S_n}(B)\otimes e_{S_n}(F)$ can be easily computed using the character table of $S_n$.

So let $X$ be an algebraic variety. We will prove that its equivariant $E$-polynomial is uniquely determined by the $E$-polynomials of the quotients $X/S_{\mu}$, where for a partition $\mu=(\mu_1,\ldots,\mu_l)$ of $n$, $S_{\mu}$ denotes the subgrup $S_{\mu_1}\times\cdots\times S_{\mu_l}<S_n$. Identifying each irreducible representation of $S_n$ with its character, we can write
\begin{equation*}
    e_{S_n}(X)=\sum_{\lambda\in\mathcal{P}art(n)}a_{\lambda}\chi_{\lambda}.
\end{equation*}
If we now call $\chi_{\lambda}^{\mu}$ to the dimension of the subspace $V_{\lambda}^{\mu}$ of $V_{\lambda}$ invariant under the action of $S_{\mu}$, then part (a) of Proposition \ref{propeqvar} gives us the formula
\begin{equation*}
    e(X/S_{\mu})=\sum_{\lambda\in\mathcal{P}(n)}a_{\lambda}\chi_{\lambda}^{\mu}.
\end{equation*}

Therefore, we have a system of $|\mathcal{P}(n)|$ linear equations
\begin{equation}\label{systemofquotients}
    \left\{e(X/S_{\mu})=\sum_{\lambda\in\mathcal{P}(n)}a_{\lambda}\chi_{\lambda}^{\mu}\right\}_{\mu\in\mathcal{P}(n)},
\end{equation}
with the $a_{\lambda}$ as unknowns. In Theorem \ref{matrixofcaracters}, we shall prove that the matrix of coefficients of this system is invertible and thus, to compute $e_{S_n}(X)$ we just need to solve the system (\ref{systemofquotients}).

To prove such a result, we will need some previous results. Before stating them, let us fix some notation. We say that a partition $\mu'=(\mu'_1,\ldots,\mu'_s)$ of $n$ refines another partition $\mu=(\mu_1\ldots,\mu_l)$ if there exist $l$ indexes $1\leq s_1<s_2<\ldots<s_l= s$ such that for all $i=1,\ldots,l$, $(\mu'_{s_{i-1}+1},\ldots,\mu'_{s_{i}})$ is a partition of $\mu_i$. We say that a permutation $\sigma\in S_n$ has cycle structure associated to a partition $\mu=(\mu_1\ldots,\mu_l)$ of $n$ if its decomposition into disjoint cycles is of the form
\begin{equation*}
    (n_1\ \ldots\ n_{\mu_1})(n_{\mu_1+1}\ \ldots \ n_{\mu_2})\cdots(n_{\sum_{=1}^{l-1}\mu_i+1}\ \ldots \ n_{\mu_l}).
\end{equation*}
Now, let $\mathrm{Ind}_{S_{\mu}}(1)$ be the representation of $S_n$ induced by the trivial representation $1$ of $S_{\mu}$. Let us call $\chi_{S_{\mu}}=\chi(\mathrm{Ind}_{S_{\mu}}(1))$ to the character of this representation. We have

\begin{prop}\label{charvalue}
    The character $\chi_{S_{\mu}}$ is non-negative and it satisfies that $\chi_{S_{\mu}}(g)>0$ if and only if the cycle structure of $g$ is given by a partition $\mu'$ that refines $\mu$. 
\end{prop}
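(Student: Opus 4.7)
The plan is to interpret $\chi_{S_\mu}$ as the character of the permutation representation of $S_n$ on the coset space $S_n/S_\mu$, and then read both claims directly from that description.

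First I would recall that for any subgroup $K \leq S_n$, the induced representation $\mathrm{Ind}_K^{S_n}(1)$ is isomorphic to the permutation representation of $S_n$ on the set of left cosets $S_n/K$, so its character at $g\in S_n$ counts fixed cosets:
\begin{equation*}
    \chi_{S_\mu}(g) \;=\; \bigl|\{\, xS_\mu \in S_n/S_\mu \,:\, gxS_\mu = xS_\mu \,\}\bigr| \;=\; \bigl|\{\, xS_\mu \,:\, x^{-1}gx \in S_\mu \,\}\bigr|.
\end{equation*}
Since this is the cardinality of a set, $\chi_{S_\mu}(g)\in \mathbb{Z}_{\geq 0}$, which gives the non-negativity. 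Moreover, $\chi_{S_\mu}(g)>0$ if and only if some conjugate of $g$ lies in $S_\mu$.

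Next I would translate ``conjugate to an element of $S_\mu$'' into a statement about cycle structures. Recall that $S_\mu = S_{\mu_1}\times\cdots\times S_{\mu_l}$ acts as the permutations of $\underline{n}$ that stabilize the set partition into blocks $B_i = \{\mu_1+\cdots+\mu_{i-1}+1,\ldots,\mu_1+\cdots+\mu_i\}$ of sizes $\mu_1,\ldots,\mu_l$. An element of $S_\mu$ is therefore a tuple $(\sigma_1,\ldots,\sigma_l)$ where $\sigma_i\in S_{\mu_i}$ has some cycle type, namely a partition of $\mu_i$; concatenating these partitions produces a partition $\mu'$ of $n$ which by construction refines $\mu$. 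Since conjugation in $S_n$ preserves cycle type, if $g$ is conjugate to an element of $S_\mu$ then the cycle type of $g$ refines $\mu$.

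Conversely, if the cycle type of $g$ is a partition $\mu'$ refining $\mu$, I would exhibit an element of $S_\mu$ with that cycle type: group the cycles of $\mu'$ according to the refinement so that the first block of cycles has total size $\mu_1$, the second has total size $\mu_2$, and so on, and place these cycles on the blocks $B_1,\ldots,B_l$ respectively. This produces an element $g'\in S_\mu$ of cycle type $\mu'$, and by the standard classification of conjugacy classes in $S_n$ it is conjugate to $g$. Combining with the previous paragraph, $\chi_{S_\mu}(g)>0$ if and only if the cycle structure of $g$ refines $\mu$.

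No step here looks like a real obstacle: the argument is essentially bookkeeping with the standard fact that conjugacy classes in $S_n$ are indexed by cycle type. The only care needed is in making the refinement condition precise and matching the author's convention (where cycles of $\sigma_i\in S_{\mu_i}$ are grouped together in consecutive positions), but once $\chi_{S_\mu}$ is identified with $|\{xS_\mu : x^{-1}gx\in S_\mu\}|$ everything follows from cycle-type considerations.
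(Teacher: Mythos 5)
Your proposal is correct and follows essentially the same route as the paper: both identify $\chi_{S_\mu}$ with the permutation character on the cosets $S_n/S_\mu$, deduce non-negativity from the fixed-coset count, characterize positivity as conjugacy into $S_\mu$, and translate that into the refinement condition on cycle types. Your phrasing of the character as $\bigl|\{\, xS_\mu : x^{-1}gx \in S_\mu \,\}\bigr|$ is in fact slightly cleaner than the paper's explicit basis computation, but the mathematical content is identical.
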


\begin{proof}
    We are firstly going to recall how to construct $\mathrm{Ind}_{S_{\mu}}(1)$. The trivial representation $1$ is given by the trivial action of $S_{\mu}$ over the space $\mathbb{R}$. Let $S_n/S_{\mu}=\{\sigma_1S_{\mu},\sigma_2S_{\mu},\ldots, \sigma_rS_{\mu}\}$. Then, the induced representation $\mathrm{Ind}_{S_{\mu}}(1)$ is given by the vector space
    \begin{equation*}
        W=\bigoplus_{i=1}^r\sigma_i\mathbb{R}
    \end{equation*}
    with $S_n$ acting on an element $\sigma_iv\in\sigma_i\mathbb{R}$ by 
    \begin{equation*}
    \sigma\cdot \sigma_iv=\sigma_j(\tau\cdot v),
    \end{equation*}
    where $\sigma_j\in S_n$ and $\tau\in S_{\mu}$ are the only elements such that $\sigma\sigma_i=\sigma_j\tau$. But since the action of $S_{\mu}$ is trivial over $\mathbb{R}$, we have $\sigma\cdot \sigma_iv=\sigma_j$. 
    
    Therefore, if we consider the basis $B=\{\sigma_11,\ldots,\sigma_r1\}$ of $W$, then an element $\sigma\in S_n$ acts over $B$ by permuting its elements. So the matrix of $\sigma$ with respect to this matrix is a permutation matrix, whose trace is positive. This proves the first statement. For the second part, we observe that the trace of this matrix is positive if and only if $\sigma$ fixes some of the vectors $\sigma_i1$. This means that $\sigma\sigma_i=\sigma_i\tau$ for some $\tau\in S_{\mu}$, or, equivalently, $\sigma_i^{-1}\sigma\sigma_i=\tau$.

    So an element $\sigma\in S_n$ fixes an element of the basis $B$ if and only if it is conjugated to an element of $S_{\mu}$ (it can be easily proven that if $g^{-1}\sigma g=\tau$ for some $g\in S_n$ and $\tau\in S_{\mu}$ then, if $g\in \sigma_iS_{\mu}$, we have that $\sigma_i\sigma\sigma_i^{-1}=\tau'$ for another $\tau'\in S_{\mu}$). It is well known that two permutations of $S_n$ are conjugated if and only if they have the same cycle structure. But the elements of $S_{\mu}$ have cycles structures associated to partitions $\mu'$ that refine $\mu$, and for each $\mu'$ refining $\mu$, there is a permutation $\tau\in S_{\mu}$ whose cycles structure is $\mu'$. So $\sigma$ is conjugated to an element in $S_{\mu}$ if and only if its cycle structure satisfies this condition.
\end{proof}

\begin{prop}\label{charactersli}
    The characters $\chi_{s_{\mu}}$ are linearly independent.
\end{prop}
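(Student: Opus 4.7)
The plan is to exhibit the family $\{\chi_{S_{\mu}}\}_{\mu \in \mathcal{P}(n)}$ as a triangular system with nonzero diagonal inside the space of class functions on $S_n$. Since $S_n$ has as many conjugacy classes as there are partitions of $n$ (indexed by cycle structure), a class function is determined by its values on representatives $g_{\mu'}$ of each cycle type $\mu'$. So I will consider the square matrix $M = (M_{\mu,\mu'})_{\mu,\mu' \in \mathcal{P}(n)}$ with $M_{\mu,\mu'} = \chi_{S_{\mu}}(g_{\mu'})$, and show it is invertible; this will imply linear independence of the characters immediately.

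The first step is to read off from Proposition \ref{charvalue} that $M_{\mu,\mu'} > 0$ when $\mu'$ refines $\mu$, and $M_{\mu,\mu'} = 0$ otherwise. The key combinatorial remark needed is that refinement forces $\ell(\mu') \geq \ell(\mu)$, with equality if and only if $\mu' = \mu$: indeed, refining $\mu$ means partitioning each part of $\mu$ into parts of $\mu'$, so $\ell(\mu') \geq \ell(\mu)$, and equality forces every part of $\mu$ to be subdivided into exactly one part of $\mu'$, hence $\mu = \mu'$.

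Next I would fix any total order $\prec$ on $\mathcal{P}(n)$ that extends the partial order by length (shorter partitions first, ties broken arbitrarily) and order the rows and columns of $M$ accordingly. Then any entry $M_{\mu,\mu'}$ with $\mu' \prec \mu$ satisfies $\ell(\mu') \leq \ell(\mu)$, so by the previous paragraph $\mu'$ cannot refine $\mu$ (either $\ell(\mu') < \ell(\mu)$ is strict, or $\ell(\mu') = \ell(\mu)$ and $\mu' \neq \mu$), hence $M_{\mu,\mu'} = 0$. Thus $M$ is upper triangular with positive diagonal $M_{\mu,\mu} = \chi_{S_{\mu}}(g_{\mu}) > 0$, since $\mu$ trivially refines itself. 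This gives $\det M \neq 0$ and concludes the argument.

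The only delicate point is the treatment of partitions of equal length, which a priori could spoil triangularity; but the ``equality forces equality'' observation above implies that within each length level of the filtration all off-diagonal entries of $M$ vanish, so the particular choice of tie-breaking in $\prec$ is immaterial.
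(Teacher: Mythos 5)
Your proof is correct and is essentially the paper's argument in matrix form: the paper also evaluates the characters at permutations of each cycle type and exploits the triangularity of the resulting system with respect to the refinement order (processing partitions from coarsest to finest and killing coefficients one at a time), which is exactly forward substitution on your matrix $M$. Your explicit observation that refinement forces $\ell(\mu')\geq\ell(\mu)$ with equality only for $\mu'=\mu$ makes the choice of linear extension cleaner than in the paper, but the underlying idea is the same.
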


\begin{proof}
    The partitions of $n$ form a partially ordered set by the relation of refinement\\
    \begin{equation*}
        \xymatrix{
        &&(n)&\\
        &(1,n-1)\ar[ur]&(2,n-2)\ar[u]&\cdots\\
        (1,1,n-2)\ar[ur]\ar[urr]|!{[r];[ur]}\hole &(1,2,n-3)\ar[u]&\cdots&
        }
    \end{equation*}
    Suppose now that we have a linear combination 
    \begin{equation*}
        \sum_{\mu}\alpha_{\mu}\chi_{\mu}=0.
    \end{equation*}
    Take the permutation $\sigma=(1\ 2 \ \ldots \ n)$. Then, we have
     \begin{equation*}
        \alpha_{(n)}\chi_{S_{(n)}}(\sigma)+\sum_{\mu\text{ refines }(n)}\alpha_{\mu}\chi_{\mu}(\sigma)=0. 
    \end{equation*}
    By Proposition \ref{charvalue}, we know that $\chi_{S_{(n)}(\sigma)}\neq 0$ and $\chi_{\mu}(\sigma)=0$ for all $\mu\neq (n)$, from which we conclude that $\alpha_{(n)}=0$.

    Proceeding recursively and taking for each partition $\mu=(\mu_1,\ldots,\mu_k)$ the permutation 
    \begin{equation*}
    (1 \ \ldots \ \mu_1)(\mu_1+1 \ \ldots \ \mu_1+\mu_2)\ldots (\sum_{i=1}^{k-1}\mu_i+1 \ \ldots \ n),
    \end{equation*}
    we get that $a_{\mu}=0$ for all $\mu\in\mathcal{P}(n)$.
\end{proof}

We are now ready to prove the key result of the method. 

\begin{tma}\label{matrixofcaracters}
    The matrix $A=(\chi_{\lambda}^{\mu})_{\lambda,\mu}$ is invertible.
\end{tma}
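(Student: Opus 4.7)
The plan is to recognize the matrix $A$ as (essentially) the change-of-basis matrix expressing the permutation characters $\chi_{S_\mu}$ in the orthonormal basis of irreducible characters $\chi_\lambda$, and then conclude via the linear independence already established in Proposition \ref{charactersli}.

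More concretely, the first step is to use Frobenius reciprocity to compute the entries of $A$. For fixed irreducible representation $V_\lambda$ and parabolic subgroup $S_\mu < S_n$, we have
\begin{equation*}
    \chi_\lambda^\mu = \dim V_\lambda^{S_\mu} = \dim \mathrm{Hom}_{S_\mu}(\mathbb{C}, V_\lambda|_{S_\mu}) = \dim \mathrm{Hom}_{S_n}\bigl(\mathrm{Ind}_{S_\mu}^{S_n}(1), V_\lambda\bigr) = \langle \chi_{S_\mu}, \chi_\lambda \rangle,
\end{equation*}
where the middle equality is Frobenius reciprocity. Consequently, since the irreducible characters $\{\chi_\lambda\}_{\lambda \in \mathcal{P}(n)}$ form an orthonormal basis of the space of class functions on $S_n$, we obtain the expansion
\begin{equation*}
    \chi_{S_\mu} = \sum_{\lambda \in \mathcal{P}(n)} \chi_\lambda^\mu \, \chi_\lambda.
\end{equation*}
In other words, the $\mu$-th column of $A$ records the coordinates of $\chi_{S_\mu}$ in the basis $\{\chi_\lambda\}$.

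The second and final step is to observe that the number of partitions indexing rows equals the number indexing columns, so $A$ is square of size $|\mathcal{P}(n)|$. Hence $A$ is invertible if and only if the set of columns is linearly independent, which, by the expansion above and the fact that the $\chi_\lambda$ are a basis, is equivalent to the linear independence of the family $\{\chi_{S_\mu}\}_{\mu \in \mathcal{P}(n)}$. This is precisely the content of Proposition \ref{charactersli}, so we conclude that $A$ is invertible.

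There is no serious obstacle here: once Frobenius reciprocity is invoked to identify the entries of $A$ with inner products, the result reduces cleanly to the already-proven linear independence of the induced characters $\chi_{S_\mu}$. The only small subtlety to mention in passing is that, since $\{\chi_\lambda\}$ is an orthonormal basis, nondegeneracy of $A$ is truly equivalent to linear independence of the $\chi_{S_\mu}$ (no extra rank argument is needed), which is why Proposition \ref{charactersli} suffices.
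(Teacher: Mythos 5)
Your proposal is correct and follows essentially the same route as the paper: both identify the entries of $A$ as $\langle \chi_{S_\mu},\chi_\lambda\rangle$ via Frobenius (Ind--Res) reciprocity, interpret the columns as the coordinates of $\chi_{S_\mu}$ in the orthonormal basis of irreducible characters, and conclude invertibility from the linear independence of the $\chi_{S_\mu}$ established in Proposition \ref{charactersli}. Your write-up is somewhat more explicit about why linear independence of the columns suffices, but the argument is the same.
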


\begin{proof}%[Proof of Theorem \ref{matrixofcaracters}]
    By the well-known Int-Res duality, we have that
    \begin{equation*}
    \chi_{\lambda}^{S_{\mu}}=\langle 1_{S_{\mu}},\mathrm{Res}_{S_{\mu}}(\chi_{\lambda})\rangle_{S_{\mu}}=\langle \chi_{S_{\mu}},\chi_{\lambda}\rangle_{S_n}.
    \end{equation*}
    But the latter are just the coordinates of $\chi_{S_{\mu}}$ in the basis $\chi_{\lambda}$, and, since these characters are linearly independent, the matrix $A=(\chi_{\lambda}^{\mu})_{\lambda,\mu}$ is invertible.
\end{proof}

\begin{remark}\label{murnagham}
    The matrix $A$ can be explicitly computed using the Murnagham-Nakayama rule, which describes the value of $\chi_{\lambda}(\sigma)\in\mathbb{C}$ in terms of the Young diagram, since 
    \begin{equation*}
        \chi_{\lambda}^{\mu}=\langle1_{\mu},\chi_{\lambda}\rangle=\frac{1}{|S_{\mu}|}\sum_{\sigma\in S_{\mu}}\chi_{\lambda}(\sigma).
    \end{equation*}
\end{remark}

The results of this section allow us to compute the $S_n$-equivariant $E$-polynomial of a variety as long as we know the $E$-polynomials of all the quotients $X/S_{\mu}$. In the following section, we will compute these $E$-polynomials for the case when our variety is a cartesian product of varieties or a configuration space.

\begin{ejem}
    Let us use our method to find a formula for the $S_2$-equivariant $E$-polynomial of a variety. Let $T$ be the trivial representation of $S_2$ and $N$ be the sign representation. Then the equivariant $E$-polynomial is given by $e_{S_2}(X)=aT+bN$, and we only have to compute $a$ and $b$. Let us first find our matrix. For this, we need to compute the dimensions $\chi_{(2)}^{(2)},\chi_{(1,1)}^{(2)},\chi_{(2)}^{(1,1)}$ and $\chi_{(1,1)}^{(1,1)}$. By Remark \ref{murnagham}, we have
    \begin{itemize}
        \item $\chi_{(2)}^{(2)}=\frac{1}{|S_2|}\left(\chi_{(2)}\left((1)(2)\right)+\chi_{(2)}\left((1 \ 2)\right)\right)=\frac{1}{2}(1+1)=1$,\\
        \item $\chi_{(1,1)}^{(2)}=\frac{1}{|S_2|}\left(\chi_{(1,1)}\left((1)(2)\right)+\chi_{(1,1)}\left((1 \ 2)\right)\right)=\frac{1}{2}(1-1)=0$,\\
        \item $\chi_{(2)}^{(1,1)}=\frac{1}{|S_1\times S_1|}\chi_{(2)}\left((1)(2)\right)=1$,\\
        \item $\chi_{(1,1)}^{(1,1)}=\frac{1}{|S_1\times S_1|}\chi_{(1,1)}\left((1)(2)\right)=1$.
    \end{itemize}
    So $a$ and $b$ are given by the linear system
    \begin{equation*}
        \left(\begin{array}{cc}
        1 & 0\\
        1 & 1
        \end{array}\right)\left(\begin{array}{c}
        a\\
        b
        \end{array}\right)=\left(\begin{array}{c}
        e(X/S_2)\\
        e(X)
        \end{array}\right).
    \end{equation*}
    Therefore, our method yields the following formula for the equivariant $E$-polynomial,
    \begin{equation}\label{pmformula2}
        e_{S_2}(X)=e(X/S_2)T+\left(e(X)-e(X/S_2)\right)N,
    \end{equation}
    which is the known formula for the $S_2$-equivariant $E$-polynomial.
\end{ejem}

\begin{ejem}
    With the help of a computer program, we can obtain formulas for symmetric groups of higher order. For example, for $S_3$, apart from the standard and the sign representations, we also have the standard representation $D$. Therefoere, the equivariant $E$-polynomial is $e_{S_3}(X)=aT+bD+cN$. In this case, the system of equations is
    \begin{equation*}
        \left(\begin{array}{ccc}
        1 & 0 & 0\\
        1 & 1 & 0\\
        1 & 2 & 1
        \end{array}\right)\left(\begin{array}{c}
        a\\
        b\\
        c
        \end{array}\right)=\left(\begin{array}{c}
        e(X/S_3)\\
        e(X/S_2\times S_1)\\
        e(X)
        \end{array}\right),
    \end{equation*}
    and the equivariant $E$-polynomial is
    \begin{equation*}
        e_{S_3}(X)=e(X/S_3)T+\left(e(X/S_2\times S_1)-e(X/S_3)\right)D+\left(e(X/S_3)-2e(X/S_2\times S_1)+e(X)\right)N.
    \end{equation*}
    This formula coincides with the one given in \cite{gonzalez2023representation}.
    
    For $S_4$ we now have $5$ representations: the trivial $V_{(4)}$, the sign $V_{(1,1,1,1)}$, the standard $V_{(3,1)}$ and two others, $V_{(2,1,1)}$ and $V_{(2,2)}$. In this case, our method gives us the formula
    \begin{align*}
        e_{S_4}(X)&=e(X/S_4)V_{(4)}+\left(e(X/S_3\times S_1)-e(X/S_4)\right)V_{(3,1)}\\
        &+\left(e(X/S_2\times S_2)-e(X/S_3\times S_1)\right)V_{(2,2)}\\
        &+\left(e(X/S_4)-e(X/S_3\times S_1)-e(X/S_2\times S_2)+e(X/S_2\times S_1 \times S_1)\right)V_{(2,1,1)}\\
        &+\left(-e(X/S_4)+2e(X/S_3\times S_1)+e(X/S_2\times S_2)\right. \\
        & \left.\qquad -3e(X/S_2\times S_1 \times S_1)+e(X)\right)V_{(1,1,1,1)}.
    \end{align*}
\end{ejem}

\section{$E$-polynomial of $C_n(X)/S_{\lambda}$ and $X^n/S_{\lambda}$}\label{CnSlambda}
In this section we are going to compute the $E$-polynomial of the quotients $X^n/S_{\lambda}$ and $C_n(X)/S_{\lambda}$. To do this we are going to use Theorem \ref{fibrationCn} and the following result.

\begin{lema}\label{Hactstrivial}
    Let $E\to B$ be a locally trivial fiber bundle in the analytic topology with fiber $F$ and trivial monodromy. Let $H_1$ and $H_2$ be two finite groups such that $H=H_1\times H_2$ acts over $E$. Suppose that the induced action of $H$ on $B$ and $F$ satisfies that $H_2$ acts trivially over $B$ and $H_1$ acts trivially on $F$. Then 
    \begin{equation*}
        e(E/H)=e(B/H_1)e(F/H_2).
    \end{equation*}
\end{lema}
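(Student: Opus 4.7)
The plan is to reduce the statement to an equivariant $E$-polynomial calculation and then extract the trivial isotypic component, using Proposition \ref{propeqvar} as the main tool.

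First I would apply part (c) of Proposition \ref{propeqvar} (or Proposition \ref{stratifiedfibration} if smoothness is an issue) to the fibration $F\to E\to B$ equipped with the action of $H=H_1\times H_2$, yielding
\begin{equation*}
    e_H(E)=e_H(B)\otimes e_H(F).
\end{equation*}
Next I would use the two triviality hypotheses. Since $H_2$ acts trivially on $B$, each graded piece $H^{k,p,q}_c(B)$ is a representation of $H$ in which $H_2$ acts trivially, hence corresponds under the isomorphism $R(H_1\times H_2)\cong R(H_1)\otimes R(H_2)$ to a class of the form $[V]\otimes [\mathbf{1}_{H_2}]$. Packaging this coefficient by coefficient gives $e_H(B)=e_{H_1}(B)\otimes \mathbf{1}_{H_2}$. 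Symmetrically, $e_H(F)=\mathbf{1}_{H_1}\otimes e_{H_2}(F)$. Plugging these into the previous equation yields
\begin{equation*}
    e_H(E)=e_{H_1}(B)\otimes e_{H_2}(F)
\end{equation*}
as an element of $R(H_1)\otimes R(H_2)[u,v]$.

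Finally I would apply Proposition \ref{propeqvar}(a) with $K=H$: the $E$-polynomial $e(E/H)$ is the coefficient of the trivial representation $\mathbf{1}_H$ in $e_H(E)$ (cf.\ Remark \ref{coeftrivial}). Under the identification $\mathbf{1}_H=\mathbf{1}_{H_1}\otimes \mathbf{1}_{H_2}$, the trivial coefficient of an external tensor product $\alpha\otimes\beta\in R(H_1)\otimes R(H_2)$ is the product of the trivial coefficient of $\alpha$ in $R(H_1)$ and of $\beta$ in $R(H_2)$. Therefore
\begin{equation*}
    e(E/H)=\bigl(\text{trivial coeff of }e_{H_1}(B)\bigr)\cdot\bigl(\text{trivial coeff of }e_{H_2}(F)\bigr)=e(B/H_1)\,e(F/H_2),
\end{equation*}
again by Proposition \ref{propeqvar}(a) applied to the pairs $(B,H_1)$ and $(F,H_2)$.

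The only subtle step I expect is the justification that the multiplicativity $e_H(E)=e_H(B)\otimes e_H(F)$ holds with the $H$-action: one needs the local trivializations to be compatible with the $H$-action up to the trivial monodromy hypothesis, and strictly speaking part (c) is stated for smooth $B$ and $F$. If $B$ or $F$ fails to be smooth, I would stratify them into smooth locally closed pieces as in the proof of Corollary \ref{CntoCm} and apply Proposition \ref{stratifiedfibration} piece by piece; the $H$-equivariance of the stratification can be arranged because $H$ is finite, so one averages the stratification over the $H$-orbits. The rest of the argument is purely representation-theoretic and formal.
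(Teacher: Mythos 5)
Your proposal is correct and follows essentially the same route as the paper: multiplicativity of the equivariant $E$-polynomial for the fibration, the factorizations $e_H(B)=e_{H_1}(B)\otimes \mathbf{1}_{H_2}$ and $e_H(F)=\mathbf{1}_{H_1}\otimes e_{H_2}(F)$, and extraction of the trivial isotypic component via the fact that $V_1\otimes V_2$ is trivial iff both factors are. Your closing caveat about smoothness and stratification is a reasonable extra precaution, but the paper's proof proceeds exactly as in your main argument.
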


\begin{proof}
    By Remark \ref{coeftrivial}, we know that the $E$-polynomial of $E/H$ is just the coefficient of the trivial representation of $H$ in $e_H(E)$, so we just need to prove that this coefficient is $e(B/H_1)e(F/H_2)$. By Proposition \ref{propeqvar}, we have $e_H(E)=e_H(B)\otimes e_H(F)$. But, since $H_2$ acts trivially on $B$, we have that $e_H(B)=e_{H_1}(B)\otimes T_2$, where $T_2$ is the trivial representation of $H_2$, and, analogously, $e_H(F)=T_1\otimes e_{H_1}(F)$, being $T_1$ the trivial representation of $H_1$.

    Therefore, 
    \begin{equation*}
        e_H(E)=e_{H_1}(B)\otimes e_{H_2}(F).
    \end{equation*}
    Given two representations $V_1$ and $V_2$ of $H_1$ and $H_2$ respectively, it happens that $V_1\otimes V_2$ is the trivial representation $T_1\otimes T_2$ of $H_1\times H_2$ if and only if $V_1=T_1$ and $V_2=T_2$, so we conclude that the coefficient of the trivial representation in $e_H(E)$ must be the product of the coefficient of $T_1$ in $e_{H_1}(B)$ and the coefficient of $T_2$ in $e_{H_2}(F)$, and, again by Remark \ref{coeftrivial}, these are $e(B/H_1)$ and $e(F/H_2)$ respectively.
\end{proof}

\begin{coro}\label{eSlambdaXn}
    Let $X$ be an algebraic variety and let $\lambda=(\lambda_1,\ldots,\lambda_l)$ be a partition of $n$. Suppose that $S_{\lambda}$ acts over $X^n$ by permutation of the corresponding coordinates. Then 
    \begin{equation*}
        e(X^n/S_{\lambda})=\prod_{i=1}^le(X^{\lambda_i}/S_{\lambda_i}).
    \end{equation*}
\end{coro}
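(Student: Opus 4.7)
The plan is to iteratively apply Lemma \ref{Hactstrivial} to the trivial product bundle and then induct on the length $l$ of the partition.

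First, I would view $X^n$ as $X^{\lambda_1}\times X^{n-\lambda_1}$ via the projection $p:X^n\to X^{\lambda_1}$ onto the first $\lambda_1$ coordinates. This is a trivial (algebraic, hence analytic) fiber bundle with fiber $F=X^{n-\lambda_1}$, and in particular has trivial monodromy. I then decompose the acting group as $S_\lambda=H_1\times H_2$, where $H_1=S_{\lambda_1}$ acts on the base $X^{\lambda_1}$ by permutation of coordinates and trivially on the fiber, while $H_2=S_{\lambda_2}\times\cdots\times S_{\lambda_l}=S_{(\lambda_2,\ldots,\lambda_l)}$ acts trivially on the base and by block-permutation on the fiber $X^{n-\lambda_1}$. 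These are precisely the hypotheses of Lemma \ref{Hactstrivial}, so I obtain
\begin{equation*}
e(X^n/S_\lambda)=e\bigl(X^{\lambda_1}/S_{\lambda_1}\bigr)\cdot e\bigl(X^{n-\lambda_1}/S_{(\lambda_2,\ldots,\lambda_l)}\bigr).
\end{equation*}

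Next, I induct on the length $l$ of the partition. The base case $l=1$ is the tautology $e(X^{\lambda_1}/S_{\lambda_1})=e(X^{\lambda_1}/S_{\lambda_1})$. For the inductive step, the partition $(\lambda_2,\ldots,\lambda_l)$ of $n-\lambda_1$ has length $l-1$, so by the induction hypothesis
\begin{equation*}
e\bigl(X^{n-\lambda_1}/S_{(\lambda_2,\ldots,\lambda_l)}\bigr)=\prod_{i=2}^l e\bigl(X^{\lambda_i}/S_{\lambda_i}\bigr),
\end{equation*}
and multiplying by the factor $e(X^{\lambda_1}/S_{\lambda_1})$ obtained above yields the desired formula.

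I do not anticipate any real obstacle here: the only point worth checking is that the action of $S_\lambda$ on $X^n$ genuinely splits as a direct product action with respect to the bundle decomposition, which is immediate once one recalls that $S_\lambda$ was defined precisely as the subgroup $S_{\lambda_1}\times\cdots\times S_{\lambda_l}<S_n$ permuting disjoint blocks of indices. With that in hand, Lemma \ref{Hactstrivial} does all the work, and the recursion closes trivially.
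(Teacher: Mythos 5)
Your proposal is correct and follows the same route as the paper: apply Lemma \ref{Hactstrivial} to the projection $p_{\lambda_1}^n:X^n\to X^{\lambda_1}$ with the group split as $S_{\lambda_1}\times S_{(\lambda_2,\ldots,\lambda_l)}$, then recurse on the remaining factor. You merely spell out the induction on the length $l$ and the verification that the action splits, which the paper leaves implicit.
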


\begin{proof}
    Applying Lemma \ref{Hactstrivial} to the projection $p_{\lambda_1}^n:X^n\to X^{\lambda_1}$, we get 
    \begin{equation*}
        e(X^n/S_{\lambda})=e(X^{\lambda_1}/S_{\lambda_1})e(X^{n-\lambda_1}/S_{\lambda'}),
    \end{equation*}
    where $\lambda'=(\lambda_2,\ldots,\lambda_l)$ is a partition of $n-\lambda_1$. Proceeding recursively on $X^{n-\lambda_1}/S_{\lambda'}$, we obtain the result.
\end{proof} 

\begin{coro}
    Let $X$ be an algebraic variety and let $\lambda=(\lambda_1,\ldots,\lambda_l)$ be a partition of $n$. Suppose that $S_{\lambda}$ acts over $C_n(X)$ by permutation of the corresponding coordinates. Then 
    \begin{equation*}
        e(C(X_n)/S_{\lambda})=\prod_{i=1}^le(C_{\lambda_i}(X-\{P_1,\ldots,P_{n_i}\})/S_{\lambda_i}),
    \end{equation*}
    where $n_i=\sum_{j=1}^{i-1}\lambda_j$.
\end{coro}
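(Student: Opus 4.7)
The plan is to apply Lemma \ref{Hactstrivial} recursively to the fibration $\pi_{\lambda_1}^n:C_n(X)\to C_{\lambda_1}(X)$ of Theorem \ref{fibrationCn}, in direct parallel with the proof of Corollary \ref{eSlambdaXn}. Writing $S_\lambda=S_{\lambda_1}\times S_{\lambda'}$ with $\lambda'=(\lambda_2,\ldots,\lambda_l)$, note that $S_{\lambda_1}$ permutes only the first $\lambda_1$ coordinates: its induced action on the base $C_{\lambda_1}(X)$ is the usual permutation action, while its action on the fiber $C_{n-\lambda_1}(X-\{P_1,\ldots,P_{\lambda_1}\})$ is trivial because it fixes each of the last $n-\lambda_1$ coordinates pointwise. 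Dually, $S_{\lambda'}$ permutes only the last $n-\lambda_1$ coordinates, so it acts trivially on the base and by the natural permutation action on the fiber.

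Applying Lemma \ref{Hactstrivial} then yields
\begin{equation*}
e(C_n(X)/S_\lambda)=e(C_{\lambda_1}(X)/S_{\lambda_1})\cdot e\bigl(C_{n-\lambda_1}(X-\{P_1,\ldots,P_{\lambda_1}\})/S_{\lambda'}\bigr).
\end{equation*}
One then recurses on the second factor, applied to the variety $X'=X-\{P_1,\ldots,P_{\lambda_1}\}$ with the partition $\lambda'$ of $n-\lambda_1$: the next step strips off $\lambda_2$ further points, so after $i$ stages one has removed $\lambda_1+\lambda_2+\cdots+\lambda_{i-1}=n_i$ points, matching the indexing in the statement. Iterating until the partition is exhausted produces the announced product.

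The main obstacle is that Lemma \ref{Hactstrivial} requires an honest locally trivial fibration in the analytic topology, while Theorem \ref{fibrationCn} only supplies this directly when $X$ is smooth. To handle arbitrary (possibly singular) $X$, one stratifies $X=X_1\sqcup\cdots\sqcup X_k$ into smooth pieces as in the proof of Corollary \ref{CntoCm}, decomposes $C_n(X)=\bigsqcup_{p+q=n}\binom{n}{p}C_p(X_{\leq k-1})\times C_q(X_k)$ (an $S_\lambda$-invariant stratification, since $S_\lambda$ only permutes coordinates and the strata are defined by which block each coordinate lies in), applies Lemma \ref{Hactstrivial} to each smooth stratum of the restriction, and then invokes Proposition \ref{stratifiedfibration} together with additivity of the $E$-polynomial to reassemble. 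Since each ingredient is $S_\lambda$-equivariant and the factorization respects the product structure of $S_\lambda$, the recursive identity survives the stratification step and yields the claim in full generality.
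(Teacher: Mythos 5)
Your proposal is correct and follows essentially the same route as the paper, which simply applies Lemma \ref{Hactstrivial} recursively to the fibration $\pi_{\lambda_1}^n:C_n(X)\to C_{\lambda_1}(X)$ of Theorem \ref{fibrationCn} exactly as in Corollary \ref{eSlambdaXn}. Your additional stratification argument for singular $X$ is a point the paper leaves implicit, so if anything your write-up is more careful than the original.
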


\begin{proof}
    The proof is analogous to the one of Corollary \ref{eSlambdaXn}, but taking into account that the fiber of $\pi_m^n:C_n(X)\to C_m(X)$ is $C_{n-m}(X-\{P_1,\ldots,P_m\})$.
\end{proof}

\begin{ejem}
    We are now going to use the results obtained in this paper to study the configuration space of orbits in a concrete case. Consider the action of $\mathbb{C}^*$ on $\mathrm{GL}_2=\mathrm{GL}_2(\mathbb{C})$. It is well known that the quotient of this variety for this action is just $\mathrm{GL}_2\sslash \mathbb{C}^*=\PGL_2$. Therefore, to study the equivariant $E$-polynomial of $C_n(\mathrm{GL}_2,\mathbb{C}^*)$, we just need to study the equivariant $E$-polynomial of $C_n(\PGL_2)$, so we can use the formulas obtained throughout this work. For example, for $n=2$ we want to compute
    \begin{equation*}
        e_{S_2}\left(C_n(\mathrm{GL}_2,\C^*)\right)=e_{S_2}\left(C_n(\PGL_2)\right)\otimes e_{S_2}\left((\C^*)^2\right).
    \end{equation*}
    So by formula (\ref{pmformula2}), we need to compute $e\left(C_2(\PGL_2)\right),e\left(C_2(\PGL_2)/S_2\right), e\left((\C^*)^2\right)$ and $e\left((\C^*)^2/S_2\right)$. But by Example \ref{geoejem} we know that 
    \begin{equation*}
        e\left(C_2(\PGL_2)\right)=q^6 - 2q^4 - q^3 + q^2 + q,
    \end{equation*}
    and, by Example \ref{CnquotSnejem}, we have
    \begin{equation*}
        e\left(C_2(\PGL_2)/S_2\right)=q^{6} - 4 q^{3} + 6.
    \end{equation*}
    It can be easily seen that $e\left((\C^*)^2\right)$ and $e\left((\C^*)^2/S_2\right)$ are $q^2-2q+1$ and $q^2-q$ respectively. So the equivariant $E$-polynomial is
    \begin{align*}
        e_{S_2}\left(C_2(\mathrm{GL}_2,\C^*)\right)&=\left[\left(q^6 - 2q^4 - q^3 + q^2 + q\right)T+\left(-2q^4+3q^3+q^2+q-6\right)N\right]\\
        &\qquad \otimes \left[\left(q^2-2q+1\right)T+\left(-q+1\right)N\right]\\
        &=\left(q^8 - 2q^7 - q^6 + 5q^5 - 4q^4 - q^2 + 8q - 6\right)T\\
        &\qquad +\left(-q^7 - q^6 + 9q^5 - 8q^4 - 7q^2 + 14*q - 6\right)N.
    \end{align*}
    Recall that by Remark \ref{coeftrivial}, this also gives us
    \begin{equation*}
        e\left(C_2(\mathrm{GL}_2,\C^*)\right)=q^8 - 3q^7 - 2q^6 + 14q^5 - 12q^4 - 8q^2 + 22q - 12,
    \end{equation*}
    and
    \begin{equation*}
        e\left(C_2(\mathrm{GL}_2,\C^*)/S_2\right)=q^8 - 2q^7 - q^6 + 5q^5 - 4q^4 - q^2 + 8q - 6.
    \end{equation*}
    For higher order, as usual, we need the help of a computer program to do all the computations, but we can still compute the equivariant $E$-polynomials. For $n=3$, for example, we get the formula
    \begin{align*}
        e_{S_3}\left(C_3(\mathrm{GL}_2,\C^*)\right)&=\left(q^{12} - q^{11} - q^{10} + q^{9} + 2 q^{8} - 4 q^{6} - q^{5} + q^{4} + 3 q^{3} + q^{2} - 2 q\right)T\\
        &+\left(-q^{11} + q^{9} + q^{8} - 2 q^{7} + q^{6} + 3 q^{5} - q^{4} - 3 q^{3} - q^{2} + 2 q\right)D\\
        &+\left(q^{10} + 2 q^{9} - q^{8} - 5 q^{7} - 3 q^{6} + 4 q^{5} + 5 q^{4} - q^{3} - 2 q^{2}\right)N,
    \end{align*}
    from which we get 
    \begin{equation*}
        e\left(C_3(\mathrm{GL}_2,\C^*)\right)=q^{12} - 3 \, q^{11} + 5 \, q^{9} + 3 \, q^{8} - 9 \, q^{7} - 5 \, q^{6} + 9 \, q^{5} + 4 \, q^{4} - 4 \, q^{3} - 3 \, q^{2} + 2 \, q
    \end{equation*}
    and
    \begin{equation*}
        e\left(C_3(\mathrm{GL}_2,\C^*)/S_3\right)=q^{12} - q^{11} - q^{10} + q^{9} + 2 q^{8} - 4 q^{6} - q^{5} + q^{4} + 3 q^{3} + q^{2} - 2 q.
    \end{equation*}
    As a last example, we do the case $n=4$, where our method gives us the equivariant $E$-polynomial
    \begin{align*}
        e_{S_4}\left(C_4(\mathrm{GL}_2,\C^*)\right)&=\left(q^{16} - q^{15} - q^{14} + q^{13} + 3 q^{12} - 2 q^{11} - 6 q^{10} + 2 q^{9}\right. \\
        &\left.\qquad +9 q^{8} + 6 q^{7} - 14 q^{6} - 11 q^{5} + 3 q^{4} + 13 q^{3} + 5 q^{2} - 8 q\right)V_{(4)}\\
        & +\left(-q^{15} + 4 q^{12} - 2 q^{11} - 5 q^{10} + 3 q^{8} + 5 q^{7} + q^{6}\right.\\
        &\left.\qquad - 5 q^{4} - 4 q^{3} + 2 q^{2} + 2 q\right)V_{(3,1)}\\
        &+\left(q^{13} - 2 q^{11} + q^{10} - 2 q^{9} + q^{8} - 2 q^{7} + 4 q^{6} + 4 q^{5} - 3 q^{4}\right.\\
        &\left.\qquad - 2 q^{3} - 3 q^{2} + 3 q\right)V_{(2,2)}\\
        &+\left(q^{14} + q^{13} - 2 q^{12} - 3 q^{11} + 4 q^{10} + 7 q^{9} - 2 q^{8} - 16 q^{7} - 5 q^{6} + 13 q^{5} \right.\\
        &\left.\qquad + 10 q^{4} - 3 q^{3} - 6 q^{2} + q\right)V_{(2,1,1)}\\
        &+\left(-2 q^{12} - 5 q^{11} - 2 q^{10} + 13 q^{9} + 13 q^{8}\right.\\
        &\left.\qquad - 3 q^{7} - 18 q^{6} - 8 q^{5} + 9 q^{4} + 4 q^{3} - q\right)V_{(1,1,1,1)},
    \end{align*}
    and the $E$-polynomials
    \begin{align*}
        e\left(C_4(\mathrm{GL}_2,\C^*)\right)&=q^{16} - 4 \, q^{15} + 2 \, q^{14} + 6 \, q^{13} + 7 \, q^{12} - 26 \, q^{11} - 9 \, q^{10} + 32 \, q^{9}\\
        &\qquad + 27 \, q^{8} - 34 \, q^{7} - 36 \, q^{6} + 28 \, q^{5} + 21 \, q^{4} - 8 \, q^{3} - 13 \, q^{2} + 6 \, q
    \end{align*}
    and
    \begin{align*}
        e\left(C_4(\mathrm{GL}_2,\C^*)/S_4\right)&=q^{16} - q^{15} - q^{14} + q^{13} + 3 q^{12} - 2 q^{11} - 6 q^{10} + 2 q^{9}\\
        &\qquad + 9 q^{8} + 6 q^{7} - 14 q^{6} - 11 q^{5} + 3 q^{4} + 13 q^{3} + 5 q^{2} - 8 q.
    \end{align*}
\end{ejem}

\begin{remark}
    From our method we can also obtain the $S_{\lambda}$-equivariant $E$-polynomial of $C_n(X,G)$ for any partition $\lambda=(\lambda_1,\ldots,\lambda_l)$ of $n$. Indeed, we have
    \begin{equation*}
        e_{S_{\lambda}}(C_n(X,G))=e_{S_\lambda}\left(C_n(X\sslash G)\right)\otimes e_{S_{\lambda}}(G^n),
    \end{equation*}
    but, as we saw in the proof of Lemma \ref{Hactstrivial}, 
    \begin{equation*}
        e_{S_{\lambda}}(G^n)=\bigotimes_{i=1}^le_{S_{\lambda_i}}(G^{\lambda_i}),
    \end{equation*}
    and
    \begin{equation*}
        e_{S_{\lambda}}(C_n(X\sslash G))=\bigotimes_{i=1}^le_{S_{\lambda_i}}\left(C_{\lambda_i}(X\sslash G-\{P_1,\ldots,P_{n_i}\})\right),
    \end{equation*}
    where $n_i=\sum_{j=1}^{i-1}\lambda_j$. So we have
    \begin{align*}
        e_{S_{\lambda}}(C_n(X,G))&=\bigotimes_{i=1}^le_{S_{\lambda_i}}\left(C_{\lambda_i}(X\sslash G-\{P_1,\ldots,P_{n_i}\}\right)\otimes \bigotimes_{i=1}^le_{S_{\lambda_i}}\left(G^{\lambda_i}\right)\\
        &=\bigotimes_{i=1}^l\left(e_{S_{\lambda_i}}\left(C_{\lambda_i}(X\sslash G-\{P_1,\ldots,P_{n_i}\}\right)\otimes e_{S_{\lambda_i}}\left(G^{\lambda_i}\right)\right)\\
        &=\bigotimes_{i=1}^le_{S_{\lambda_i}}(C_{\lambda_i}(X-\pi^{-1}(\{P_1,\ldots,P_{n_i}\}),G)).
    \end{align*}
    From this we can also deduce that 
    \begin{equation*}
        e\left(C_n(X,G)/S_{\lambda}\right)=\prod_{i=1}^le\left(C_{\lambda_i}\left(X-\pi^{-1}(\{P_1,\ldots,P_{n_i}\}),G\right)/S_{\lambda_i}\right).
    \end{equation*}
\end{remark}

\bibliographystyle{abbrv}
\bibliography{mybibliography}

\end{document}